\setlist{nolistsep}
\tikzset{
  pt/.style={insert path={node[scale=0.5]{.}}},
  ptt/.style={insert path={node[scale=3]{.}}},
  du/.style={insert path={ [pt] .. controls +(0,1) and +(0,-1) .. +(#1,2) [pt]}},
  dd/.style={insert path={ [pt] .. controls +(0,1) and +(0,1) .. +(#1,0) [pt]}},
  uu/.style={insert path={ [pt] .. controls +(0,-1) and +(0,-1) .. +(#1,0) [pt]}},
  dm/.style={insert path={ [pt] -- +(#1,-1) [ptt]}},
  um/.style={insert path={ [pt] -- +(#1,1) [ptt]}},
  dq/.style={insert path={ [pt] -- +(#1,-0.75) [ptt]}},
  uq/.style={insert path={ [pt] -- +(#1,0.75) [ptt]}},
  dum/.style={insert path={ [pt] .. controls +(0,0.5) and +(0,-0.5) .. +(#1,1) [pt]}},
  udm/.style={insert path={ [pt] .. controls +(0,-0.5) and +(0,0.5) .. +(#1,-1) [pt]}},
  ddm/.style={insert path={ [pt] .. controls +(0,0.4) and +(0,0.4) .. +(#1,0) [pt]}},
  uum/.style={insert path={ [pt] .. controls +(0,-0.4) and +(0,-0.4) .. +(#1,0) [pt]}},
}
\newcommand{\tikzinline}[1]{\vcenter{\hbox{#1}}}
\newtheorem{theoremsimplenumber}{Theorem}
\newtheorem*{theorem*}{Theorem}
\newtheorem{theorem}{Theorem}[section]
\newtheorem{lemma}[theorem]{Lemma}
\newtheorem{proposition}[theorem]{Proposition}
\newtheorem{conjecture}[theorem]{Conjecture}
\theoremstyle{definition}
\newtheorem{definition}[theorem]{Definition}
\theoremstyle{remark}
\newtheorem{remark}[theorem]{Remark}
\title{\texorpdfstring{On the Structure and Generators \\ of the $n$th-order Chromatic Algebra}{On the Structure and Generators \\ of the nth-order Chromatic Algebra}}
\author{Ethan Liu}
\begin{document}

\maketitle
\begin{abstract}
%The chromatic algebra is an associative algebra which combines the concept of graph colorings with abstract algebra. It was introduced by Fendley and Krushkal to study statistical mechanics, after they noticed that the chromatic polynomial naturally arises in the low-temperature expansion of the Potts model. The chromatic algebra also closely relates to the Temperley--Lieb algebra, which is important in connecting quantum field theory, knot theory, and computational logic.

This work investigates the intrinsic properties of the chromatic algebra, introduced by Fendley and Krushkal as a framework to study the chromatic polynomial. We prove that the dimension of the $n$th-order chromatic algebra is the $2n$th Riordan number, which exhibits exponential growth. We find a generating set of size $\binom{n}{2}$, and we provide a procedure to construct the basis from the generating set. We additionally provide proofs for fundamental facts about this algebra that appear to be missing from the literature. These include determining a representation of the chromatic algebra as noncrossing planar partitions and expanding the chromatic relations to include an edge case.
\end{abstract}

\section{Introduction}

A graph is a collection of vertices and edges, where edges connect two (not necessarily distinct) vertices. One important property of a graph is its graph colorings, which are ways to assign at most a fixed number of colors to the graph's vertices such that no edge can connect two vertices with the same color. The number of graph colorings of a graph $G$, as a function of the number of colors permitted, coincides with a polynomial function called the \emph{chromatic polynomial} $\chi_{G}(Q)$. We aim to prove fundamental results about the chromatic algebra, which provides an algebraic framework to study the chromatic polynomial.

The chromatic algebra is an associative algebra which combines the concept of graph colorings with abstract algebra. This algebra was introduced by Fendley and Krushkal after they noticed that the chromatic polynomial naturally arises in the low-temperature expansion of the Potts model \cite{fendley2010link}. It is closely related to the Temperley--Lieb algebra, which is important in connecting quantum field theory, knot theory, and computational logic \cite{abramsky2007temperley}. It also relates to the SO(3) Birman--Murakami--Wenzl 
algebras \cite{fendley2010link,birman1989braids,murakami1987kauffman}.

The chromatic algebra has already proven useful in studying the chromatic polynomial. Previously, W.~T.~Tutte discovered several ``golden identities" of the chromatic polynomial when evaluating at the special value $Q=\phi+1$, where $\phi = \tfrac{1+\sqrt{5}}{2}$ denotes the golden ratio. One such identity is formulated in \cite[(11.15)]{tutte1998graph} as follows: for related planar graphs $Y_1$, $Y_2$, $Z_1$, and $Z_2$ with the relationship given by \cite[Figure~11.1]{tutte1998graph},
\begin{align*}
    \chi_{Z_1}(\phi+1)+\chi_{Z_2}(\phi+1)=\phi^{-3}\left(\chi_{Y_1}(\phi+1)+\chi_{Y_2}(\phi+1)\right).
\end{align*}
Fendley and Krushkal used the chromatic algebra to demonstrate that this identity arises from a Jones--Wenzl projector in the Temperley--Lieb algebra \cite[Section~2]{fendley2009tutte}, and generalized it to a larger set of values $Q = 2 + 2 \cos{(\tfrac{2\pi j}{n+1})}$ for positive integers $j < n$ \cite[Lemma~5.3]{fendley2009tutte}.

Furthermore, the chromatic polynomial as it arises in the Potts model has significance in statistical mechanics. Specifically, a generalization of the chromatic polynomial in two indeterminates, known as the \emph{dichromatic polynomial} or \emph{Tutte polynomial}, is the partition function of the Potts model \cite[Section~12.2]{baxter2016exactly}, and its zeroes represent possible points of physical phase transitions \cite[Section~4]{yang1952statistical}. In the Potts model's low-temperature case, this partition function specializes to the chromatic polynomial. Here, the chromatic algebra describes the model's geometric degrees of freedom \cite[Section~3]{fendley2010link}. Thus the chromatic algebra is deeply linked to many branches of mathematics.

In this investigation, we prove fundamental results about the structure of the chromatic algebra, including its dimension, and we provide a generating set for the chromatic algebra viewed as a finitely-generated (even finite-dimensional) algebra.

In Section~\ref{sec:background}, we provide background definitions leading up to the chromatic algebra. We then introduce, in Section~\ref{sec:chromatic-algebra}, a rigorous definition of the chromatic algebra, resolving some ambiguities and typos in the exposition of Fendley and Krushkal \cite{fendley2010link}. Section~\ref{sec:dimension} and Section~\ref{sec:generating-set} introduce our first and second main results determining the dimension and a multiplicative generating set, respectively, of the chromatic algebra.

Our main results are stated below.
\begin{theoremsimplenumber}
    \label{thm:dimension}
    The dimension of $\mathcal{C}_n$ is the $2n$th Riordan number $R_{2n}$.
\end{theoremsimplenumber}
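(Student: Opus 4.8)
The plan is to reduce the theorem to a purely enumerative statement using the diagrammatic description of $\mathcal{C}_n$, and then to evaluate that enumeration with a generating function. By the representation of the chromatic algebra established in Section~\ref{sec:chromatic-algebra}, a basis of $\mathcal{C}_n$ is given by the noncrossing planar partitions of the $2n$ boundary points of the rectangle in which no block is a singleton. Since these points lie on the boundary of a disk, such planar partitions are precisely the noncrossing partitions of the cyclically ordered point set, and for a fixed ordering these coincide with the noncrossing partitions of the linearly ordered set $[2n] = \{1, 2, \dots, 2n\}$ all of whose blocks have size at least $2$. It therefore suffices to prove that the number $a_m$ of noncrossing partitions of $[m]$ having no singleton block equals the Riordan number $R_m$, and then to specialize to $m = 2n$.

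To count these partitions I would introduce the generating function $F(x) = \sum_{m \ge 0} a_m x^m$, with the conventions $a_0 = 1$ (the empty partition) and $a_1 = 0$ (a lone point would be a forbidden singleton). The key step is a decomposition according to the block $B$ containing the smallest element $1$. Writing $B = \{1 = b_0 < b_1 < \dots < b_t\}$ with $t \ge 1$, so that $|B| = t+1 \ge 2$, the elements of $B$ cut the remaining points into $t+1$ consecutive gaps $G_0, \dots, G_t$, where $G_i$ consists of the points strictly between $b_i$ and $b_{i+1}$ and $G_t$ consists of the points after $b_t$. The noncrossing condition forces every other block to lie entirely inside a single gap, so the gaps are partitioned independently, each by a noncrossing partition with no singletons. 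Recording a factor $x$ for each of the $t+1$ elements of $B$ and a factor $F(x)$ for each of the $t+1$ gaps yields the functional equation
\begin{equation*}
    F = 1 + \sum_{t \ge 1} (xF)^{t+1} = 1 + \frac{(xF)^2}{1 - xF}.
\end{equation*}

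Clearing denominators rearranges this to the quadratic $x(1+x)F^2 - (1+x)F + 1 = 0$, whose branch regular at the origin is
\begin{equation*}
    F(x) = \frac{(1+x) - \sqrt{1 - 2x - 3x^2}}{2x(1+x)},
\end{equation*}
which is exactly the generating function of the Riordan numbers. Extracting the coefficient of $x^m$ gives $a_m = R_m$, and setting $m = 2n$ establishes $\dim \mathcal{C}_n = R_{2n}$.

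I expect the main obstacle to lie not in the generating-function manipulation, which is routine once the functional equation is in hand, but in justifying the combinatorial reduction cleanly. One must verify that the no-singleton diagrammatic partitions genuinely form a basis rather than a mere spanning set, and that the noncrossing condition on the disk really does make the gaps $G_0, \dots, G_t$ independent, with no block allowed to join a gap to the exterior region past $b_t$; the degenerate cases of empty gaps and of the block $B$ exhausting all of $[m]$ must also be tracked so that the indexing of the sum over $t$ is exact. A more transparent, if longer, alternative to the generating-function computation would be to exhibit a bijection between these partitions and the Motzkin paths of length $m$ having no level step on the $x$-axis, which are a standard combinatorial model enumerated by $R_m$.
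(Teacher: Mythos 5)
The gap is in your opening sentence. What Section~\ref{sec:chromatic-algebra} establishes (quoting Fendley--Krushkal) is only that $B_n$, the set of diagrams without inner edges, is an additive basis of $\mathcal{C}_n$. It does \emph{not} establish that these diagrams are in bijection with the noncrossing partitions of the $2n$ boundary points without singletons; that identification is exactly where the paper's proof of this theorem spends its effort, and the paper stresses that Fendley and Krushkal suggested this bijection without proof, so it cannot be waved through as ``established.'' Concretely, two things must be checked: that distinct diagrams in $B_n$ (distinct as isotopy classes of embedded graphs) always induce distinct partitions (Lemma~\ref{lem:injective-partitions}, which the paper reduces to an adjacency-matrix comparison via Lemma~\ref{lem:adjacency-matrix-equivalence}), and that every noncrossing no-singleton partition is realized by some diagram in $B_n$ (Lemma~\ref{lem:surjective-partitions}, proved by an explicit star construction inside each block together with a planarity check). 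Your closing caveat --- that one must ``verify that the no-singleton diagrammatic partitions genuinely form a basis rather than a mere spanning set'' --- points at the wrong place: that $B_n$ is a basis is the part you may legitimately cite; the missing step is the diagram-to-partition bijection itself.

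The enumerative half of your proposal is correct and is a genuinely different route from the paper's. The paper disposes of the counting by citing Bernhart \cite[Table~2, R3]{bernhart1999catalan}, whereas you rederive it: the first-block decomposition with gaps $G_0,\dots,G_t$ is valid (a block meeting two distinct gaps would cross the block containing $1$), the functional equation $F = 1 + \sum_{t \ge 1}(xF)^{t+1}$ and the quadratic $x(1+x)F^2-(1+x)F+1=0$ are right, and the branch regular at the origin, $F(x)=\bigl((1+x)-\sqrt{1-2x-3x^2}\bigr)/\bigl(2x(1+x)\bigr)$, is indeed the Riordan generating function; your reduction from the circular to the linear noncrossing condition is also fine. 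So if you supply the bijection argument (or reprove it), your write-up would actually be more self-contained than the paper's at the counting step; as it stands, however, it assumes the very correspondence that constitutes the bulk of the paper's work for this theorem.
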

\begin{theoremsimplenumber}
    \label{thm:chromatic-generating-set}
    Let $i,j,n$ be positive integers such that $1 \leq i < j \leq n$. Let $e_{i,j}^{n} \in B_n$ denote the diagram in which the all top and bottom boundary points between the $i$th from the left and the $j$th from the left, inclusive, are all connected to one inner vertex, and all other boundary points are paired with the corresponding boundary point on the opposite side by vertical edges. Let $E_n$ be the subset of $B_n$ containing all possible $e_{i,j}^{n}$. The set $E_n$ generates $\mathcal{C}_n$.
\end{theoremsimplenumber}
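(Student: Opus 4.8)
The plan is to work entirely in the noncrossing--partition basis $B_n$ furnished by the representation established in Section~\ref{sec:chromatic-algebra}, and to show that every basis diagram $\pi \in B_n$ lies in the subalgebra $\langle E_n \rangle$ generated by $E_n$. Since $B_n$ is a basis and $\dim \mathcal{C}_n = R_{2n}$ is finite by Theorem~\ref{thm:dimension}, proving $B_n \subseteq \langle E_n \rangle$ immediately yields $\langle E_n \rangle = \mathcal{C}_n$. The subtlety is that multiplication in $\mathcal{C}_n$, governed by the chromatic relations, does not merely take the transitive closure of two stacked partitions: resolving the internal regions created by stacking produces a \emph{linear combination} of partition diagrams. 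I will therefore not try to hit a target $\pi$ exactly, but argue by \emph{triangularity}: for each $\pi$ I exhibit an explicit product of generators equal to $\pi$ plus a combination of strictly ``simpler'' diagrams, and then induct along a well-founded complexity order on $B_n$.

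First I would record the local rewriting rules coming from the multiplication of Section~\ref{sec:chromatic-algebra}, computing the products $e_{i,j}^{n}\, e_{k,l}^{n}$ in the cases where the intervals $[i,j]$ and $[k,l]$ are disjoint, nested, or overlapping. Having all $\binom{n}{2}$ interval generators available lets us realize the merge of any contiguous block directly rather than having to synthesize it: $e_{i,j}^{n}$ already performs the ``join $t_i,\dots,t_j,b_i,\dots,b_j$'' move, so every through--block supported on a single interval is obtained at once. The key auxiliary move to isolate is the creation of a \emph{purely top} (respectively purely bottom) block, that is, a cap or cup not attached to the opposite boundary. I expect to obtain this from a short product of the form $e_{i,j}^{n}\, x\, e_{i,j}^{n}$, whose internal region closes off and, under the chromatic/Fortuin--Kasteleyn resolution, rewrites as the desired one--sided block plus terms whose underlying partition is coarser or has fewer nontrivial blocks.

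With these moves in hand I would give the construction procedure by induction on $n$, using the standard unital embedding $\iota\colon \mathcal{C}_{n-1}\hookrightarrow \mathcal{C}_n$ that adjoins a through--strand in position $n$; since $\iota\!\left(e_{i,j}^{\,n-1}\right)=e_{i,j}^{n}$ for $j\le n-1$, we have $\iota(E_{n-1})\subseteq E_n$ and hence $\iota(\mathcal{C}_{n-1})\subseteq \langle E_n\rangle$ by the inductive hypothesis. Given $\pi \in B_n$, I would peel the last strand: if $\{t_n,b_n\}$ is a block of $\pi$ on its own then $\pi=\iota(\pi')$ and we are done; otherwise $t_n$ or $b_n$ is joined to earlier points, and I would left/right multiply by a single generator $e_{i,n}^{n}$ to detach the block structure at position $n$, landing, up to strictly simpler terms, in the image of $\iota$, which is already generated. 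Iterating installs the full block structure of $\pi$ from the last strand inward, in an order compatible with the laminar nesting of the noncrossing partition.

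The main obstacle I anticipate is twofold. The genuinely delicate step is the cap/cup lemma: since every generator $e_{i,j}^{n}$ is a \emph{through}--diagram connecting top to bottom, producing a one--sided block forces the internal region of a product to pinch off, and verifying that the chromatic resolution yields exactly the intended one--sided block as its leading term demands a careful case analysis of the relations from Section~\ref{sec:chromatic-algebra}. The second difficulty is bookkeeping: I must fix a complexity statistic on $B_n$, for instance the number of nontrivial blocks together with a refinement/nesting-depth tiebreaker, for which every product above is provably upper--triangular, so that the induction is well founded and all correction terms are absorbed at earlier stages. Once triangularity holds in that order, $B_n \subseteq \langle E_n\rangle$ follows, completing the proof.
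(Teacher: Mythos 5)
Your core mechanism is the right one, and it is in fact the same one the paper uses: stacking a generator onto a diagram creates a multi-edge between two inner vertices, and the chromatic relations resolve that multi-edge into a linear combination of its contraction and its deletion (the paper's Lemma~\ref{lem:contraction-deletion-linear-combination}); this is exactly the ``triangularity'' you describe, and the paper likewise runs the outer induction on $n$ by appending vertical strands, just as your embedding $\iota$ does. However, your specific reduction has a genuine gap at the peeling step. If $t_n$ (or $b_n$) lies in a top-isolated (bottom-isolated) block, multiplying by a single generator $e_{i,n}^{n}$ does \emph{not} detach anything: gluing the generator's vertex onto a one-sided block closes the glued edges into loops or multi-edges between the generator's vertex and the block, and the relations collapse the product back to a scalar multiple of the diagram you started with. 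Concretely, for $n=2$ and $\pi$ the diagram with top block $\{t_1,t_2\}$ and bottom block $\{b_1,b_2\}$, one computes $e_{1,2}^{2} \times \pi = (Q-1)\pi$, so $\pi$ is an eigenvector of generator multiplication and no amount of left/right multiplication lands it in the image of $\iota$ plus simpler terms. The correct identity here is $\pi = e_{1,2}^{2}e_{1,2}^{2} - (Q-2)e_{1,2}^{2}$, i.e.\ your cap/cup lemma; so that lemma is not an auxiliary move but the essential device, and your proposal defers precisely its proof. Moreover, for \emph{nested} one-sided blocks the construction needs several further iterations (products of already-built cap/cup diagrams at nested intervals, plus the fact that a $0$-valent vertex arising from deletion can be removed, which itself requires a short argument from the relations), and the well-founded order making all correction terms ``strictly simpler'' is never defined, so the induction as written cannot be checked.

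It is worth noting how the paper sidesteps exactly this difficulty: instead of \emph{building} one-sided blocks out of through-generators (your direction), it \emph{eliminates} them. Lemma~\ref{lem:isolated-component-contraction} takes an isolated component $L$ covering nothing (hence occupying consecutive positions $i$ through $j$), contracts its vertex into an adjacent component to obtain $G'$, and observes that $e_{i,j}^{n} \times G'$ is the multi-edge diagram, hence a linear combination of $G$ and $G'$; solving for $G$ expresses it through $E_n$ and $G'$, which has strictly fewer isolated components. Once all isolated components are gone (Lemma~\ref{lem:isolated-component-removal}), only crossing components remain, and these are peeled off left to right (Lemma~\ref{lem:only-crossing-reduction}), reducing to a smaller order $n$. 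The resulting ordering --- number of isolated components, then number of crossing components, then $n$ --- is the well-founded statistic your sketch is missing, and working in the contraction direction avoids the loop and isolated-vertex bookkeeping entirely.
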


\section{Background}
\label{sec:background}

In this section, we introduce background definitions required to understand the chromatic algebra. These notions relate to associative algebras and graphs, especially planar graphs. This section culminates with the definition of chromatic diagrams and the free algebra $\mathcal{F}_n$.

\subsection{Associative Algebras}

    An \emph{associative algebra} $A$ over a field $K$, or simply a \emph{$K$-algebra}, is a set of elements which is closed under compatible operations of addition ($+$), multiplication ($\times$), and multiplication by scalars from $K$ ($\cdot$). These operations satisfy the following properties, given elements $a,b\in A$, $k \in K$:
    \begin{itemize}
        \item $(A,+,\times)$ is a ring.
        \item $(A,K,+,\cdot)$ is a vector space.
        \item $k \cdot (a \times b) = (k \cdot a) \times b = a \times (k \cdot b)$.
    \end{itemize}
    Essentially, an associative algebra is a ring whose underlying linear structure is a vector space.
    A \emph{basis} of a $K$-algebra is a minimal subset such that each element of the algebra can be uniquely written as a $K$-linear combination of the elements in the basis.
    The \emph{dimension} of a $K$-algebra is the size of any given basis. %(Note that this implies that every basis has the same size.)
A \emph{generating set} for a $K$-algebra $A$ is a subset $S\subset A$ for which every element of $A$ can be written as a polynomial with $K$-coefficients in the variables $S$. In other words, $S$ should generate $A$ by scalar multiplication, multiplication, and addition.

\subsection{Graphs}

    A \emph{graph} is a (possibly empty) collection of points and lines connecting pairs of (not necessarily distinct) points. The points and lines are commonly known as \emph{vertices} and \emph{edges}, respectively.

\begin{figure}[H]
    \begin{align*}
        \tikzinline{
        \begin{tikzpicture}
            \node[draw,circle,fill,scale=0.2] (A) at (0,0) {$ $};
            \node[draw,circle,fill,scale=0.2] (B) at (1,0) {$ $};
            \node[draw,circle,fill,scale=0.2] (C) at (1,1) {$ $};
            \node[draw,circle,fill,scale=0.2] (D) at (0,1) {$ $};
            \draw (A) -- (B) -- (C) -- (D) -- (A);
            \draw (A) -- (C);
            \draw (B) -- (D);
        \end{tikzpicture}
        }
        \hspace{20px}
        \tikzinline{
        \begin{tikzpicture}
            \node[draw,circle,fill,scale=0.2] (A) at (0,0) {$ $};
            \node[draw,circle,fill,scale=0.2] (B) at (1,0) {$ $};
            \node[draw,circle,fill,scale=0.2] (C) at (2,0) {$ $};
            \node[draw,circle,fill,scale=0.2] (D) at (3,0) {$ $};
            \draw (A) -- (B) -- (C) -- (D);
            \draw (C) .. controls ++(0.5,1) and ++(-0.5,1) .. (C);
        \end{tikzpicture}
        }
        \hspace{20px}
        \tikzinline{
        \begin{tikzpicture}
            \node[draw,circle,fill,scale=0.2] (A) at (0,5) {$ $};
            \node[draw,circle,fill,scale=0.2] (B) at (1,4) {$ $};
            \node[draw,circle,fill,scale=0.2] (C) at (-1,4) {$ $};
            \node[draw,circle,fill,scale=0.2] (D) at (1.5,3) {$ $};
            \node[draw,circle,fill,scale=0.2] (E) at (-0.5,3) {$ $};
            \node[draw,circle,fill,scale=0.2] (F) at (-1.5,3) {$ $};
            \node[draw,circle,fill,scale=0.2] (G) at (0.5,3) {$ $};
            \draw (A) -- (B) -- (D);
            \draw (B) -- (G);
            \draw (A) -- (C) -- (E);
            \draw (C) -- (F);
        \end{tikzpicture}
        }
    \end{align*}
    \caption{Examples of graphs.}
    \label{fig:graph}
\end{figure}
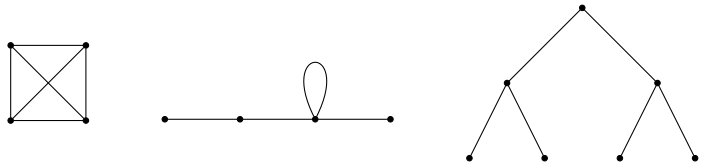

%\textcolor{red}{give examples}

    The \emph{valency} of a vertex is the number of times that edges connect to that vertex. We say a vertex is \emph{$k$-valent} if its valency is $k$.
    Our definition of a graph permits self-looping edges, which connect to the same vertex twice. We consider the valency of the vertex to be increased by two in the event of a self-looping edge. We also allow multi-edges, which increase the valency of the two vertices by as many edges as there are between them. A \emph{connected component} of a graph is a subset of vertices and edges such that from each vertex or edge, there exists a path to every vertex or edge in the component but no path to any vertices or edges outside it. A \emph{plane graph}, or a \emph{planar embedding of a graph} is a drawing of a graph in 2-dimensional space such that no two edges intersect. Plane graphs are unique up to \emph{isotopy} (continuous deformation of the graph without crossing edges), denoted by $\sim$. Isotopy of plane graphs is demonstrated in Figure~\ref{fig:isotopy}. In addition, the edges of plane graphs separate the 2-dimensional space into disjoint \emph{faces}.

\begin{figure}[H]
    \begin{align*}
        \tikzinline{
        \begin{tikzpicture}
            \node[draw,circle,fill,scale=0.2] (A) at (-1,0) {$ $};
            \node[draw,circle,fill,scale=0.2] (B) at (0,0) {$ $};
            \node[draw,circle,fill,scale=0.2] (C) at (1,1) {$ $};
            \node[draw,circle,fill,scale=0.2] (D) at (1,-1) {$ $};
            \draw (A) -- (B) -- (C) -- (D) -- (B);
        \end{tikzpicture}
        }
        \hspace{2ex}
        \sim
        \hspace{2ex}
        \tikzinline{
        \begin{tikzpicture}
            \node[draw,circle,fill,scale=0.2] (A) at (0,1) {$ $};
            \node[draw,circle,fill,scale=0.2] (B) at (0,0) {$ $};
            \node[draw,circle,fill,scale=0.2] (C) at (1,1) {$ $};
            \node[draw,circle,fill,scale=0.2] (D) at (1,-1) {$ $};
            \draw (A) -- (B) .. controls ++(0,0.2) and ++(-0.2,0) .. (C) .. controls ++(0.2,-0.4) and ++(0.2,0.4) .. (D) .. controls ++(-0.2,0) and ++(0,-0.2) .. (B);
        \end{tikzpicture}
        }
        \hspace{2ex}
        \not\sim
        \hspace{2ex}
        \tikzinline{
        \begin{tikzpicture}
            \node[draw,circle,fill,scale=0.2] (A) at (0.4,0) {$ $};
            \node[draw,circle,fill,scale=0.2] (B) at (0,0) {$ $};
            \node[draw,circle,fill,scale=0.2] (C) at (1,1) {$ $};
            \node[draw,circle,fill,scale=0.2] (D) at (1,-1) {$ $};
            \draw (A) -- (B) .. controls ++(0,0.2) and ++(-0.2,0) .. (C) .. controls ++(0.2,-0.4) and ++(0.2,0.4) .. (D) .. controls ++(-0.2,0) and ++(0,-0.2) .. (B);
        \end{tikzpicture}
        }
    \end{align*}
\caption{Isotopy and non-isotopy between three plane graphs.}
\label{fig:isotopy}
\end{figure}
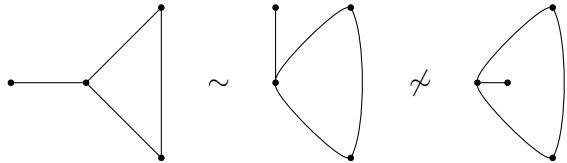

\begin{remark}
    Not all graphs can be embedded on a plane. In this paper, we only consider plane graphs.
\end{remark}

Let $G$ be a plane graph. Then, a graph $\widehat{G}$ is \emph{dual} to $G$ if it has a vertex for each face of $G$ and an edge between each adjacent pair of faces in $G$. We may also say that $\widehat{G}$ is a \emph{dual graph} of $G$. Note that $\widehat{G}$ is again a planar graph, and unique up to isotopy.

%\subsection{Graph Coloring and Chromatic Polynomial}

%This research paper does not focus on the coloring aspect of the chromatic algebra; instead, we focus on the interactions between the operations and elements of the chromatic algebra. Nonetheless, it is important to introduce graph colorings and chromatic polynomials as the initial motivation behind the chromatic algebra.

%Let $G$ be an undirected graph. Then, a \emph{proper coloring} of $G$ is an assignment of colors to vertices such that any two vertices connected by an edge have different colors. A proper coloring using at most $k$ colors is called a \emph{$k$-coloring}. Also, let $P_G(k)$ be the function mapping each nonnegative integer $k$ to the number of $k$-colorings of $G$. Then, there exists a unique polynomial $\chi_{G}(x)$ coinciding with $P_{G}(k)$ at every nonnegative integer $k$. This polynomial is called the \emph{chromatic polynomial} of $G$.

%The original mathematical purpose of the chromatic algebra heavily involves the chromatic polynomial, from which it derives its name. Specifically, Fendley and Krushkal defined the chromatic algebra \cite[Definition~3.1]{fendley2010link} such that its evaluation mapping, known as the ``trace," coincides with the chromatic polynomial of a dual graph. Thus, by studying the relationships and maps betweenthe chromatic algebra and other objects such as the Temperley--Lieb algebra and the Potts model, we can apply known results about the chromatic polynomial \cite{dong2005chromatic} to prove results in these important objects.

\subsection{Chromatic Diagram}

\begin{definition}
    An \emph{$n$th-order chromatic diagram} is a planar embedding of a graph, up to isotopy, within a rectangle such that there are $n$ marked points on the top edge of the rectangle and $n$ marked points on the bottom edge of the rectangle, and the graph is embedded in the interior of the rectangle and possibly connected to these marked \emph{boundary points}. Each boundary point must be $1$-valent.
\end{definition}
\begin{figure}[H]
    \begin{center}
        \begin{tikzpicture}[scale=0.9]
            \draw[dashed] (0,0) rectangle (6,2);
            \node[draw,circle,fill,scale=0.2] (A) at (1,1) {$ $};
            \node[draw,circle,fill,scale=0.2] (B) at (3,1) {$ $};
            \node[draw,circle,fill,scale=0.2] (C) at (4,0.7) {$ $};
            \node[draw,circle,fill,scale=0.2] (D) at (4,1.3) {$ $};
            
            \node[draw,circle,scale=0.4] (B1) at (1,0) {$ $};
            \node[draw,circle,scale=0.4] (B2) at (2,0) {$ $};
            \node[draw,circle,scale=0.4] (B3) at (3,0) {$ $};
            \node[draw,circle,scale=0.4] (B4) at (4,0) {$ $};
            \node[draw,circle,scale=0.4] (B5) at (5,0) {$ $};
            \node[draw,circle,scale=0.4] (T1) at (1,2) {$ $};
            \node[draw,circle,scale=0.4] (T2) at (2,2) {$ $};
            \node[draw,circle,scale=0.4] (T3) at (3,2) {$ $};
            \node[draw,circle,scale=0.4] (T4) at (4,2) {$ $};
            \node[draw,circle,scale=0.4] (T5) at (5,2) {$ $};
            
            \draw (1,0) -- (A) -- (1,2);
            \draw (2,0) .. controls ++(0,0.5) and ++(0.25,-0.25) .. (A);
            \draw (A) -- (B);
            \draw (3,0) -- (B);
            \draw (3,2) .. controls ++(0,-0.5) and ++(-0.25, 0.25) .. (D);
            \draw (2,2) .. controls ++(0,-0.5) and ++(0,0.25) .. (B);
            \draw (C) -- (B) -- (D);
            \draw (4,0) -- (4,2);
            \draw (5,0) -- (5,2);
        \end{tikzpicture}
    \end{center}
    \caption{A fifth-order chromatic diagram, with boundary points circled.}
    \label{fig:chromatic-diagram}
\end{figure}
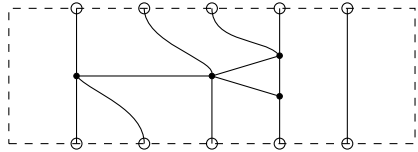

\begin{remark}
    Just as plane graphs are unique up to isotopy, chromatic diagrams are also unique up to isotopy.
\end{remark}

\subsection{\texorpdfstring{The Free Algebra $\mathcal{F}_n$}{The Free Algebra Fn}}

\begin{definition}
    Let $\mathcal{G}_n$ be the set of all $n$th-order chromatic diagrams.
\end{definition}

Henceforth we use the field $\mathbb{C}((Q))$ of complex Laurent series in the indeterminate $Q$, essentially power series which can have finitely many negative power terms.
\begin{definition}
    Let $\mathcal{F}_n$ denote the associative algebra over $\mathbb{C}((Q))$ with free additive generators given by the elements of $\mathcal{G}_n$, where multiplication is given by vertical stacking.
\end{definition}

More precisely, since planar embeddings are defined up to isotopy, multiplication can be defined by choosing planar embeddings of the two diagrams, then stacking, and then taking the isotopy class of the resulting diagram. Specifically, two chromatic diagrams $A$ and $B$ are multiplied by stacking $A$ on top of $B$ (after choosing planar embeddings for which the boundary points coincide), connecting the edges which meet at the bottom boundary points of $A$ and the top boundary points of $B$, deleting the coinciding boundary points, and removing the rectangular boundary formed by the bottom of $A$ and the top of $B$. This multiplication is demonstrated in Figure~\ref{fig:diagram-multiplication}.
\begin{figure}[H]
    \begin{align*}
        \tikzinline{
        \begin{tikzpicture}[scale=0.5]
            \draw[dashed] (0,0) rectangle (4,2);
            \draw (1,0) -- (1,2);
            \draw (2,0) .. controls ++(0,1) and ++(0,1) .. (3,0);
            \draw (2,2) .. controls ++(0,-0.5) and ++(0.5,0) .. (1,1);
            \draw (3,2) .. controls ++(0,-0.5) and ++(0.5,0) .. (1,1);
            \node[draw,circle,fill,scale=0.2] (A) at (1,1) {$ $};
        \end{tikzpicture}
        } \times \tikzinline{
        \begin{tikzpicture}[scale=0.5]
            \draw[dashed] (0,0) rectangle (4,2);
            \draw (1,0) -- (1,2);
            \draw (2,0) -- (2,2);
            \draw (3,0) -- (3,2);
        \end{tikzpicture}
        }
        &= \tikzinline{
        \begin{tikzpicture}[scale=0.5]
            \draw[dashed] (0,0) rectangle (4,2);
            \draw[dashed] (0,2) rectangle (4,4);
            \draw (1,0) -- (1,4);
            \draw (2,2) .. controls ++(0,1) and ++(0,1) .. (3,2);
            \draw (2,4) .. controls ++(0,-0.5) and ++(0.5,0) .. (1,3);
            \draw (3,4) .. controls ++(0,-0.5) and ++(0.5,0) .. (1,3);
            \node[draw,circle,fill,scale=0.2] (A) at (1,3) {$ $};
            \draw (2,0) -- (2,2);
            \draw (3,0) -- (3,2);
        \end{tikzpicture}
        } = \tikzinline{
        \begin{tikzpicture}[scale=0.5]
            \draw[dashed] (0,0) rectangle (4,2);
            \draw (1,0) -- (1,2);
            \draw (2,0) .. controls ++(0,1) and ++(0,1) .. (3,0);
            \draw (2,2) .. controls ++(0,-0.5) and ++(0.5,0) .. (1,1);
            \draw (3,2) .. controls ++(0,-0.5) and ++(0.5,0) .. (1,1);
            \node[draw,circle,fill,scale=0.2] (A) at (1,1) {$ $};
        \end{tikzpicture}
        }
        \\ \\
        \tikzinline{
        \begin{tikzpicture}[scale=0.5]
            \draw[dashed] (0,0) rectangle (4,2);
            \draw (1,0) -- (1,2);
            \draw (2,0) .. controls ++(0,0.5) and ++(-0.5,0) .. (3,1);
            \draw (2,2) .. controls ++(0,-0.5) and ++(0.5,0) .. (1,1);
            \draw (3,0) -- (3,2);
            \node[draw,circle,fill,scale=0.2] (A) at (1,1) {$ $};
            \node[draw,circle,fill,scale=0.2] (B) at (3,1) {$ $};
        \end{tikzpicture}
        } \times \tikzinline{
        \begin{tikzpicture}[scale=0.5]
            \draw[dashed] (0,0) rectangle (4,2);
            \draw (1,0) .. controls ++(0,1) and ++(0,-1) .. (2,2);
            \draw (2,0) .. controls ++(0,1) and ++(0,-1) .. (1,2);
            \draw (3,0) -- (3,2);
            \node[draw,circle,fill,scale=0.2] (A) at (1.5,1) {$ $};
        \end{tikzpicture}
        }
        &=
        \tikzinline{
        \begin{tikzpicture}[scale=0.5]
            \draw[dashed] (0,0) rectangle (4,2);
            \draw[dashed] (0,2) rectangle (4,4);
            \draw (1,2) -- (1,4);
            \draw (1,0) .. controls ++(0,1) and ++(0,-1) .. (2,2);
            \draw (2,0) .. controls ++(0,1) and ++(0,-1) .. (1,2);
            \draw (3,0) -- (3,4);
            \node[draw,circle,fill,scale=0.2] (A) at (1.5,1) {$ $};
            \draw (2,2) .. controls ++(0,0.5) and ++(-0.5,0) .. (3,3);
            \draw (2,4) .. controls ++(0,-0.5) and ++(0.5,0) .. (1,3);
            \node[draw,circle,fill,scale=0.2] (C) at (1,3) {$ $};
            \node[draw,circle,fill,scale=0.2] (D) at (3,3) {$ $};
        \end{tikzpicture}
        } = \tikzinline{
        \begin{tikzpicture}[scale=0.5]
            \draw[dashed] (0,0) rectangle (4,2);
            \draw (1,1) -- (1,2);
            \draw (1,0) .. controls ++(0,0.5) and ++(0,-0.5) .. (2,1);
            \draw (2,0) .. controls ++(0,0.5) and ++(0,-0.5) .. (1,1);
            \draw (3,0) -- (3,2);
            \node[draw,circle,fill,scale=0.2] (A) at (1.5,0.5) {$ $};
            \draw (2,1) .. controls ++(0,0.25) and ++(-0.5,0) .. (3,1.5);
            \draw (2,2) .. controls ++(0,-0.25) and ++(0.5,0) .. (1,1.5);
            \node[draw,circle,fill,scale=0.2] (C) at (1,1.5) {$ $};
            \node[draw,circle,fill,scale=0.2] (D) at (3,1.5) {$ $};
        \end{tikzpicture}
        }
    \end{align*}
\caption{Multiplication of third-order chromatic diagrams in $\mathcal{F}_3$.}
\label{fig:diagram-multiplication}
\end{figure}
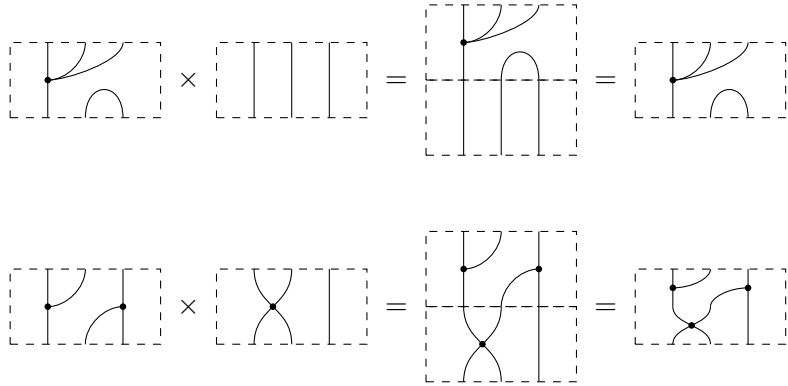

\begin{remark}
    In other words, each element of $\mathcal{F}_n$ is a formal linear combination of elements of $\mathcal{G}_n$, where all scalars come from $\mathbb{C}((Q))$. This is shown in Figure~\ref{fig:free-algebra-element}.
\end{remark}

\begin{figure}[H]
    \begin{align*}
        \frac{1}{Q-1}\tikzinline{
        \begin{tikzpicture}[scale=0.5]
            \node[draw,circle,fill,scale=0.2] (A) at (1.5,0.7) {$ $};
            \node[draw,circle,fill,scale=0.2] (B) at (1.5,1.3) {$ $};
            \draw[dashed] (0,0) rectangle (3,2);
            \draw (1,0) .. controls ++(0,0.5) and ++(-0.25,-0.25) .. (A);
            \draw (2,0) .. controls ++(0,0.5) and ++(0.25,-0.25) .. (A);
            \draw (1,2) .. controls ++(0,-0.5) and ++(-0.25,+0.25) .. (B);
            \draw (2,2) .. controls ++(0,-0.5) and ++(+0.25,+0.25) .. (B);
            \draw (A) -- (B);
        \end{tikzpicture}
        }+(1-Q)\tikzinline{
        \begin{tikzpicture}[scale=0.5]
            \draw[dashed] (0,0) rectangle (3,2);
            \draw (1,0) -- (1,2);
            \draw (2,0) -- (2,2);
        \end{tikzpicture}}
        \in \mathcal{F}_2.
        \end{align*}
\caption{An element of the free algebra $\mathcal{F}_2$.}
\label{fig:free-algebra-element}
\end{figure}

Note that $\mathcal{F}_n$ is infinite-dimensional for any $n$, because the number of $n$th-order chromatic diagrams is clearly infinite (for example, adding a zero-valent vertex to any chromatic diagram yields a distinct chromatic diagram).

\begin{comment}
\begin{remark}
Another way to view $\mathcal{G}_n$ is that it is the monoid of all $n$th-order chromatic diagrams with (noncommutative) multiplication given by stacking (it is easy to see that the multiplication is associative). Then $\mathcal{F}_n$ is just the monoid ring construction over $\mathbb{C}((Q))$.
\end{remark}
\end{comment}

\section{Chromatic Algebra}
\label{sec:chromatic-algebra}

In this section, we introduce the chromatic algebra. We also define relevant concepts such as the chromatic basis and rigorously define the chromatic algebra by resolving ambiguities in previous definitions \cite[Definition~3.1]{fendley2010link}. In particular, previous definitions did not account for certain edge cases.

\subsection{Definition of the Chromatic Algebra}

Let $G$ be a graph, and let $e$ be an edge in $G$. Then, \emph{contraction} of $e$ is its removal by merging its two endpoints into a single vertex, denoted by $G / e$. \emph{Deletion} of $e$ is its removal without otherwise modifying the graph, denoted by $G \setminus e$. Thse notions of contraction and deletion can be extended to plane graphs in chromatic diagrams. Henceforth, any mention of a graph refers to the plane graph embedded within a chromatic diagram, unless stated otherwise.
\begin{remark}
    In the context of chromatic diagrams and the free algebra, deletion is not defined for edges with an endpoint being a marked boundary point, because the resulting graph will not be a chromatic diagram. Similarly, contraction may also cause the result to not be a chromatic diagram, so we can only speak of deletion and contraction for what we call \textit{inner edges}, as defined below.
\end{remark}

    An \emph{inner vertex} in a chromatic diagram is a vertex which is not a boundary point. An \emph{inner edge} is an edge whose endpoints are both inner vertices. An \emph{outer edge} is an edge with at least one endpoint being a boundary point. A \emph{loop} is an edge whose endpoints are the same vertex.

\begin{figure}[H]
    \begin{center}
        \begin{tikzpicture}[scale=0.9]
            \draw[dashed] (0,0) rectangle (6,2);
            \node[draw,circle,fill,scale=0.2,blue] (A) at (1,1) {$ $};
            \node[draw,circle,fill,scale=0.2,blue] (B) at (3,1) {$ $};
            \node[draw,circle,fill,scale=0.2,blue] (C) at (4,0.7) {$ $};
            \node[draw,circle,fill,scale=0.2,blue] (D) at (4,1.3) {$ $};
            
            \node[draw,circle,scale=0.4,red] (B1) at (1,0) {$ $};
            \node[draw,circle,scale=0.4,red] (B2) at (2,0) {$ $};
            \node[draw,circle,scale=0.4,red] (B3) at (3,0) {$ $};
            \node[draw,circle,scale=0.4,red] (B4) at (4,0) {$ $};
            \node[draw,circle,scale=0.4,red] (B5) at (5,0) {$ $};
            \node[draw,circle,scale=0.4,red] (T1) at (1,2) {$ $};
            \node[draw,circle,scale=0.4,red] (T2) at (2,2) {$ $};
            \node[draw,circle,scale=0.4,red] (T3) at (3,2) {$ $};
            \node[draw,circle,scale=0.4,red] (T4) at (4,2) {$ $};
            \node[draw,circle,scale=0.4,red] (T5) at (5,2) {$ $};
            
            \draw[red] (1,0) -- (A) -- (1,2);
            \draw[red] (2,0) .. controls ++(0,0.5) and ++(0.25,-0.25) .. (A);
            \draw[blue] (A) -- (B);
            \draw[red] (3,0) -- (B);
            \draw[red] (3,2) .. controls ++(0,-0.5) and ++(-0.25, 0.25) .. (D);
            \draw[red] (2,2) .. controls ++(0,-0.5) and ++(0,0.25) .. (B);
            \draw[blue] (C) -- (B) -- (D);
            \draw[red] (4,0) -- (4,0.7);
            \draw[blue] (4,0.7) -- (4,1.3);
            \draw[red] (4,1.3) -- (4,2);
            \draw[red] (5,0) -- (5,2);
        \end{tikzpicture}
    \end{center}
    \caption{A fifth-order chromatic diagram. Inner edges and inner vertices are marked blue, while outer edges and boundary points are marked red.}
    \label{fig:inner-edges-outer-edges}
\end{figure}
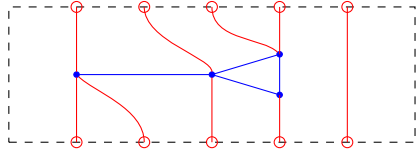

%\textcolor{red}{give examples, color the specific edges and vertices in different colors}

\begin{definition}
    Let $I_n$ denote the ideal of $\mathcal{F}_n$ generated by the following three equivalence relations, known as \emph{chromatic relations}:
    \begin{enumerate}[label={(\arabic*)}]
        \item If $e$ is an inner edge of a graph $G$ which is not a loop, then $G = G/e - G\setminus e$. 
        \item If $G$ contains an inner edge $e$ which is a loop, then $G = (Q-1) G \setminus e.$
        \item A 2-valent vertex $v$ may be deleted, merging its two adjacent edges.
    \end{enumerate}
\end{definition}

    These chromatic equivalence relations may look slightly unnatural, but they are defined such that when they are applied on chromatic diagrams, they preserve the chromatic polynomials of the dual graphs of these diagrams (the dual graph of a chromatic diagram is obtained by removing the rectangle and connecting the corresponding top and bottom boundary points). Thus the chromatic algebra can be thought of as a reduction from the very large free algebra which preserves the classes distinguished by the trace map, where the trace map is a function which is closely related to the chromatic polynomial of the dual graph (see \cite[Definition~3.2]{fendley2010link}).

\begin{definition}
    The \emph{$n$th-order chromatic algebra} $\mathcal{C}_n$ is the $\mathbb{C}((Q))$-algebra obtained by taking the quotient of $\mathcal{F}_n$ by $I_n$.
\end{definition}

%\begin{remark}
%    Here, an `inner' edge refers to an edge which connects to two inner vertices, as opposed to an `outer' edge which connects to at least one boundary point. Also, a `loop' refers specifically to an edge whose two boundary points are the same vertex, as opposed to a `circle' or `circular edge', which loops into itself without a vertex.
%\end{remark}

To illustrate the chromatic relations and their importance in reducing the chromatic algebra, in Figure~\ref{fig:chromatic-relations}, we consider a chromatic diagram upon which we apply the relations sequentially to reduce it into a linear combination of more elementary chromatic diagrams.

\begin{figure}[H]
    \begin{align*}
        \tikzinline{
        \begin{tikzpicture}[scale=0.6]
            \draw[dashed] (0,0) rectangle (3,2);
            \draw (1,0) -- (1,2);
            \draw (1,1) .. controls ++(-1,1) and ++(-1,-1) .. (1,1);
            \draw (2,0) -- (2,2);
            \draw (1,1) -- (2,1);
            \node[draw,circle,fill,scale=0.2] (A) at (1,1) {$ $};
            \node[draw,circle,fill,scale=0.2] (B) at (2,1) {$ $};
            \node[draw,circle,fill,scale=0.2] (C) at (2,0.5) {$ $};
        \end{tikzpicture}
        } &= \tikzinline{
        \begin{tikzpicture}[scale=0.6]
            \draw[dashed] (0,0) rectangle (3,2);
            \draw (1.5,1) .. controls ++(-1,1) and ++(-1,-1) .. (1.5,1);
            \draw (1,0) .. controls ++(0,1) and ++(0,-1) .. (2,2);
            \draw (2,0) .. controls ++(0,1) and ++(0,-1) .. (1,2);
            \node[draw,circle,fill,scale=0.2] (A) at (1.5,1) {$ $};
            \node[draw,circle,fill,scale=0.2] (B) at (1.89,0.5) {$ $};
        \end{tikzpicture}
        } - \tikzinline{
        \begin{tikzpicture}[scale=0.6]
            \draw[dashed] (0,0) rectangle (3,2);
            \draw (1,0) -- (1,2);
            \draw (1,1) .. controls ++(-1,1) and ++(-1,-1) .. (1,1);
            \draw (2,0) -- (2,2);
            \node[draw,circle,fill,scale=0.2] (A) at (1,1) {$ $};
            \node[draw,circle,fill,scale=0.2] (B) at (2,1) {$ $};
            \node[draw,circle,fill,scale=0.2] (C) at (2,0.5) {$ $};
        \end{tikzpicture}
        }
        \\&= (Q-1)\tikzinline{
        \begin{tikzpicture}[scale=0.6]
            \draw[dashed] (0,0) rectangle (3,2);
            \draw (1,0) .. controls ++(0,1) and ++(0,-1) .. (2,2);
            \draw (2,0) .. controls ++(0,1) and ++(0,-1) .. (1,2);
            \node[draw,circle,fill,scale=0.2] (A) at (1.5,1) {$ $};
            \node[draw,circle,fill,scale=0.2] (B) at (1.89,0.5) {$ $};
        \end{tikzpicture}
        } - (Q-1)\tikzinline{
        \begin{tikzpicture}[scale=0.6]
            \draw[dashed] (0,0) rectangle (3,2);
            \draw (1,0) -- (1,2);
            \draw (2,0) -- (2,2);
            \node[draw,circle,fill,scale=0.2] (A) at (1,1) {$ $};
            \node[draw,circle,fill,scale=0.2] (B) at (2,1) {$ $};
            \node[draw,circle,fill,scale=0.2] (C) at (2,0.5) {$ $};
        \end{tikzpicture}
        }
        \\& \hspace{40px} = (Q-1)\tikzinline{
        \begin{tikzpicture}[scale=0.6]
            \draw[dashed] (0,0) rectangle (3,2);
            \draw (1,0) .. controls ++(0,1) and ++(0,-1) .. (2,2);
            \draw (2,0) .. controls ++(0,1) and ++(0,-1) .. (1,2);
            \node[draw,circle,fill,scale=0.2] (A) at (1.5,1) {$ $};
        \end{tikzpicture}
        } - (Q-1)\tikzinline{
        \begin{tikzpicture}[scale=0.6]
            \draw[dashed] (0,0) rectangle (3,2);
            \draw (1,0) -- (1,2);
            \draw (2,0) -- (2,2);
        \end{tikzpicture}
        }
    \end{align*}
\caption{The chromatic relations (1), (2), and (3), applied in that order, reduce a chromatic diagram to its simplest components in $\mathcal{C}_2$. Note that these equalities are equalities in the chromatic algebra $\mathcal{C}_2$, not in the free algebra.}
\label{fig:chromatic-relations}
\end{figure}

\subsection{Differences from Fendley and Krushkal}

Our definition of the chromatic algebra differs from the original definition presented by Fendley and Krushkal \cite[Definition~3.1]{fendley2010link}. In this section, we elaborate on these differences and explain how our definition fixes a small mistake in the pre-existing results.

\begin{proposition}
    The chromatic relations (1), (2), and (3) imply the following relation:
    \begin{enumerate}[start=4,label={(\arabic*)}]
        \item If $G$ contains a 1-valent vertex (in the interior of the rectangle) then $G = 0$.
    \end{enumerate}
\end{proposition}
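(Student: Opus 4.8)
The plan is to argue by induction on the number of edges of $G$, using relation (1) as the engine and choosing carefully which edge to resolve. Fix a $1$-valent inner vertex $v$ and let $e = vw$ be its unique incident edge. The cleanest case is when $G$ has some inner edge $f \neq e$: because $v$ meets only $e$, the edge $f$ is not incident to $v$, so $v$ survives as a $1$-valent inner vertex in both $G/f$ and $G \setminus f$. Relation (1) then gives $G = G/f - G \setminus f$, where both terms have strictly fewer edges and still contain a $1$-valent inner vertex, so each vanishes by the inductive hypothesis and $G = 0$. Throughout I would also apply relation (3) to delete any $2$-valent inner vertices, which only decreases the edge count and never destroys $v$ (if $w$ itself is $2$-valent, deleting it merely slides the pendant onto $w$'s other neighbor).

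This reduces the problem to the base configurations in which $e$ is the only inner edge. If $w$ is an inner vertex, I apply relation (1) directly to $e$, obtaining $G = G/e - G\setminus e$; here $G/e$ erases $v$ together with $e$, while $G \setminus e$ is the very same diagram except that $v$ is left behind as an isolated $0$-valent vertex. The entire force of the proposition is therefore concentrated in the claim that an isolated $0$-valent inner vertex is immaterial in $\mathcal{C}_n$, for then the two terms coincide and cancel. If instead $w$ is a boundary point, the edge $e$ is outer and relation (1) is simply unavailable; this is the genuine edge case, and I would treat it by passing to an auxiliary diagram containing a bona fide inner edge whose deletion recreates the pendant at $v$, so that relation (1) applied to that inner edge, together with relation (3) to collapse the resulting $2$-valent vertex, forces the boundary pendant to cancel, much as a short direct computation confirms in the smallest nontrivial cases.

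I expect the main obstacle to be exactly these base cases rather than the inductive bookkeeping: pinning down the behavior of an isolated $0$-valent vertex, and of a pendant attached directly to a boundary point, neither of which is touched by any one of relations (1)--(3) in isolation. Making that cancellation precise, by showing that the leftover isolated vertex does not change the class and that a boundary pendant can always be rerouted through an inner edge, is the crux of the argument, and it is precisely the subtlety that our formulation of the chromatic relations is designed to capture.
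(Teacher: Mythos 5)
Your inductive scaffolding is fine as far as it goes (though note that when the auxiliary inner edge $f$ is a loop you must use relation (2), not (1)), but the proof never actually closes: both of your base cases are left as acknowledged ``cruxes,'' and the first of them is not a smaller lemma at all --- it is equivalent to the proposition you are trying to prove. Indeed, if $H$ is a diagram with an isolated $0$-valent vertex $p$ and $H_0$ is $H$ with $p$ erased, joining $p$ to a nearby inner vertex by an edge $f$ and applying (1) gives $G = G/f - G\setminus f = H_0 - H$, where $G$ has $p$ as a $1$-valent vertex; so ``isolated vertices are immaterial'' holds if and only if such $G$ vanish. Deferring the $1$-valent case to the $0$-valent case is therefore circular, and since none of (1)--(3) touches a $0$-valent vertex directly, there is no independent way in. Your second base case (pendant attached to a boundary point) gestures at an auxiliary diagram but never constructs it, so the argument as written does not establish anything beyond the easy inductive step.

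The single missing idea --- which is the paper's entire proof, and which makes your induction unnecessary --- is to \emph{double} the pendant edge. Given the $1$-valent vertex $v$ with edge $e$, first use (3) to insert a $2$-valent vertex $w$ in the middle of $e$; this makes the edge $e_1$ between $v$ and $w$ an inner edge regardless of whether $e$ was outer, so your two base cases merge into one. Now let $G'$ be the diagram in which $e_1$ is replaced by two parallel edges $e_1, e_1'$ between $v$ and $w$. Applying (1) to $e_1'$ gives $G' = G'/e_1' - G'\setminus e_1' = G'/e_1' - G$. On the other hand, $v$ is $2$-valent in $G'$, so (3) deletes it and merges the parallel edges into a self-loop at $w$ --- and that loop diagram is exactly $G'/e_1'$. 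Hence $G'/e_1' = G'/e_1' - G$, forcing $G = 0$ directly, with no induction and no appeal to any statement about isolated vertices. Your plan can be repaired by inserting this construction where you currently wave at ``an auxiliary diagram whose deletion recreates the pendant,'' but once it is in place the rest of your argument is superfluous.
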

\begin{proof}
    Let $G$ be a chromatic diagram with a 1-valent vertex $v$ connected to an edge $e$. By (3), we may construct a 2-valent vertex $w$ in the middle of edge $e$, splitting it into edges $e_{1}$ and $e_{2}$ such that $e_{1}$ connects $v$ and $w$. 
    
    Now, consider the chromatic diagram $G'$ obtained from $G$ by replacing $e$ with two edges $e_{1}$ and $e_{1}'$ between vertices $v$ and $w$. Applying (1) to $e_{1}'$ yields
    \begin{align*}
        G' = G'/e_{1}' - G' \setminus e_{1}' = G'/e_{1}' - G.
    \end{align*}
    By (3), the vertex $v$ in $G'$ can be removed, yielding a self-looping edge connected to vertex $w$. Thus, $G'$ is equivalent to $G'/e_{1}$, and $G = G'/e_{1}' - G' = 0$.
\end{proof}

We define the chromatic algebra slightly differently from Fendley and Krushkal \cite[Definition~3.1]{fendley2010link} in order to provide a more rigorous justification for the chromatic relations, especially concerning the addition and removal of 2-valent vertices. Now, we show why these changes were made and why the new definitions are still compatible with Fendley and Krushkal's other results.

According to Fendley and Krushkal, the chromatic algebra is defined by the following relations:
\begin{enumerate}[label={(\arabic*')},leftmargin=0.4in]
    \item If $e$ is an inner (not connected to boundary point) edge of a graph $G$ which is not a loop, then $G = G/e - G\setminus e$. 
    \item If $G$ contains an inner edge $e$ which is a loop, then $G = (Q-1) G \setminus e.$
    \item If $G$ contains a 1-valent vertex (in the interior of the rectangle) then $G = 0$.
\end{enumerate}

They claim that (1') and (3') imply that a 2-valent vertex may be deleted, and the two adjacent edges merged. Let this identity be (4'). The assumption that (1') and (3') imply (4') fails when a 2-valent vertex is adjacent to two outer edges, since neither edge can be contracted using (1'). In other words, there exists an edge case in which (1') and (3') do not imply (4'). An example is given in Figure~\ref{fig:edge-case}.

\begin{figure}[H]
    \begin{align*}
        \tikzinline
        {
        \begin{tikzpicture}[scale=0.5]
            \draw[dashed] (0,0) rectangle (2,2);
            \draw (1,0) -- (1,2);
            \node[draw,circle,fill,scale=0.2] (A) at (1,1) {$ $};
        \end{tikzpicture}
        }
        \neq
        \tikzinline
        {
        \begin{tikzpicture}[scale=0.5]
            \draw[dashed] (0,0) rectangle (2,2);
            \draw (1,0) -- (1,2);
        \end{tikzpicture}
        }
    \end{align*}
    \caption{Fendley and Krushkal's relation (4') \cite[Figure~5]{fendley2010link} cannot be applied to the left diagram.}
    \label{fig:edge-case}
\end{figure}
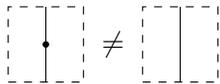

%\textcolor{red}{give the specific example in C1, and explain why Fendley-Krushkal definition does not imply that it is equivalent to the trivial graph, but it should be - Fednley and Krushkal imply this to be true, for example by giving a basis of Cn}

Our relations (1), (2), and (4) are equivalent to Fendley and Krushkal's relations (1'), (2'), and (3'), respectively. However, our relation (3) is equivalent to their relation (4'), which is not directly implied by their other relations (but appears to be assumed in their paper), so we assume our version of the definition to clear up any confusion.

\subsection{Chromatic Basis}

Amazingly, the relations $I_n$ reduce the infinite-dimensional free algebra $\mathcal{F}_n$ to the finite-dimensional algebra $\mathcal{C}_n$, with a finite basis that has been described in literature.

\begin{definition}
    Let $B_n \subset \mathcal{G}_n$ be the set of all $n$th-order chromatic diagrams without inner edges.
\end{definition}

\begin{proposition}[Fendley and Krushkal \protect{\cite[Lemma~4.1]{fendley2010link}}]
    The elements of $B_n$ form an additive basis of the chromatic algebra $\mathcal{C}_n$.
\end{proposition}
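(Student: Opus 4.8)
The plan is to establish the two defining properties of a basis separately: that $B_n$ spans $\mathcal{C}_n$, and that (after an evident normalization) its elements are linearly independent. First I would record a normalization. A diagram without inner edges may still carry a $1$-valent inner vertex, which makes it $0$ by relation (4), or a $2$-valent inner vertex, which by relation (3) is deleted with its two incident edges merged. Every element of $\mathcal{G}_n$ without inner edges is therefore equal in $\mathcal{C}_n$ either to $0$ or to a \emph{reduced} such diagram, i.e.\ one in which every inner vertex has valency at least $3$. So I take $B_n$ to consist of these reduced diagrams; this does not weaken the spanning claim and removes the redundancies that would otherwise make the literal set dependent.

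\textbf{Spanning.} I would argue by strong induction on the number of inner edges of a diagram $G\in\mathcal{G}_n$. If $G$ has no inner edges, then after the reduction above it is a $\mathbb{C}((Q))$-multiple of an element of $B_n$, or $0$. If $G$ has an inner edge $e$, there are two cases. If $e$ is a loop, relation (2) gives $G=(Q-1)\,G\setminus e$, and $G\setminus e$ has strictly fewer inner edges. If $e$ is not a loop, relation (1) gives $G=G/e-G\setminus e$; deletion removes $e$, while contraction merges the two inner endpoints of $e$ into a single inner vertex and removes $e$. In neither operation is any edge incident to a boundary point altered, so each summand has strictly fewer inner edges. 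The cleanup relations (3) and (4) never increase the inner-edge count, since merging edges at a $2$-valent vertex replaces two edges by one and deleting a $1$-valent vertex only removes structure. Thus the induction hypothesis applies to every summand, and $B_n$ spans $\mathcal{C}_n$.

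\textbf{Linear independence} (the main obstacle). The reduced elements of $B_n$ are in bijection with the noncrossing partitions of the $2n$ boundary points whose blocks all have size at least $2$: a size-$2$ block is a strand joining two boundary points, a size-$k$ block with $k\geq 3$ is a star from one inner vertex to $k$ boundary points, and planarity is exactly the noncrossing condition. To prove independence I would exhibit a nondegenerate pairing on $\mathrm{span}(B_n)$. Using the trace map $\mathrm{tr}\colon\mathcal{C}_n\to\mathbb{C}((Q))$ of \cite[Definition~3.2]{fendley2010link}, which closes a diagram by joining each top boundary point to the corresponding bottom one and evaluates a chromatic polynomial of the associated dual graph, together with the top--bottom reflection $D\mapsto D^{*}$, I would form the Gram matrix $M=\big(\langle D,D'\rangle\big)_{D,D'\in B_n}$ with $\langle D,D'\rangle=\mathrm{tr}\!\left(D\,(D')^{*}\right)$. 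Invertibility of $M$ over $\mathbb{C}((Q))$ immediately forces the elements $D$ to be linearly independent, so the whole task reduces to proving $M$ nonsingular.

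To establish invertibility I would analyze the leading power of $Q$ in each entry. Stacking $D$ on the reflection of $D'$ and closing up produces a closed plane graph whose dual has chromatic polynomial a Laurent polynomial in $Q$, and its degree is governed by the number of connected components produced when the block structures of $D$ and $D'$ are superimposed. Ordering $B_n$ by the number of blocks of the associated partition, I expect each diagonal entry $\langle D,D\rangle$ to attain the maximal $Q$-degree for its row and column while the off-diagonal entries are of strictly lower order, so that the leading term of $M$ is triangular with nonzero diagonal and $M$ is nonsingular over the field $\mathbb{C}((Q))$. The delicate point, and the step I expect to require the most care, is making this degree bookkeeping precise: one must verify that superimposing two \emph{distinct} noncrossing partitions strictly decreases the relevant component count relative to the diagonal case, while tracking the factors of $(Q-1)$ contributed by relation (2), so that no off-diagonal entry accidentally matches the diagonal order. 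Combining the spanning and independence arguments then yields that $B_n$ is a basis of $\mathcal{C}_n$.
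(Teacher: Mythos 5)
The paper offers no proof of this proposition at all: it is imported verbatim from Fendley--Krushkal \cite[Lemma~4.1]{fendley2010link}, so your attempt must be judged on its own merits rather than against an internal argument. Your spanning half is essentially correct and is the standard argument: induct on the number of inner edges, kill them with relations (1) and (2), and clean up $1$- and $2$-valent inner vertices with (4) and (3); your observation that the literal definition of $B_n$ needs a valency-$\geq 3$ normalization to even be a candidate basis is a genuine (and correct) repair of the paper's definition. One loose end: a diagram with no inner edges may also contain a $0$-valent inner vertex (the paper explicitly allows these in $\mathcal{G}_n$), and your normalization never disposes of them; they also arise inside your induction, since deleting the unique inner edge at a $1$-valent inner vertex leaves a $0$-valent one. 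They can be absorbed by the same trick the paper uses to derive relation (4) --- attach an edge $e$ from the isolated vertex to a nearby inner vertex (creating one via relation (3) if necessary), note the resulting diagram vanishes by (4), and apply (1) --- but this needs to be said.

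The genuine gap is in linear independence, which is the actual content of the proposition. You reduce it to nonsingularity of the Gram matrix $M=\bigl(\mathrm{tr}(D\,(D')^{*})\bigr)_{D,D'\in B_n}$ and then say you \emph{expect} the diagonal entries to dominate in $Q$-degree; that expectation is exactly the hard part and is nowhere established. Two things are missing. First, to pair classes in $\mathcal{C}_n$ at all you must show the trace descends to the quotient $\mathcal{F}_n/I_n$, i.e.\ that the dual-graph chromatic polynomial is invariant under relations (1)--(3); this contraction--deletion verification is itself a nontrivial step and cannot be cited away, since it is part of what the proposition asserts. Second, the degree bookkeeping needs a precise inequality, e.g.\ $\deg\langle D,D'\rangle \leq \tfrac{1}{2}\bigl(\deg\langle D,D\rangle+\deg\langle D',D'\rangle\bigr)$ with equality if and only if $D=D'$, so that the diagonal product is the unique top-degree term of $\det M$ for every permutation in the Leibniz expansion. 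For Temperley--Lieb this follows from counting closed loops, but here the trace involves the chromatic polynomial of a dual graph and extra factors of $(Q-1)$, and you give no argument ruling out ties between distinct partitions with the same block count. As written, you have proved spanning and outlined a plausible but unexecuted program for independence, so the proposition is not yet proved.
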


As an example, we list out $B_3$, a basis for $\mathcal{C}_3$, below.
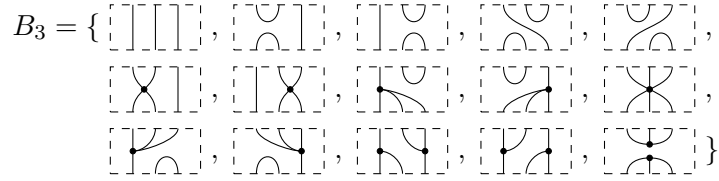
\begin{figure}[H]
    \begin{align*}
        B_3 = \{&
        \tikzinline{
        \begin{tikzpicture}[scale=0.3]
            \draw[dashed] (0,0) rectangle (4,2);
            \draw (1,0) -- (1,2);
            \draw (2,0) -- (2,2);
            \draw (3,0) -- (3,2);
        \end{tikzpicture}
        }, \tikzinline{
        \begin{tikzpicture}[scale=0.3]
            \draw[dashed] (0,0) rectangle (4,2);
            \draw (1,0) .. controls ++(0,1) and ++(0,1) .. (2,0);
            \draw (1,2) .. controls ++(0,-1) and ++(0,-1) .. (2,2);
            \draw (3,0) -- (3,2);
        \end{tikzpicture}
        }, \tikzinline{
        \begin{tikzpicture}[scale=0.3]
            \draw[dashed] (0,0) rectangle (4,2);
            \draw (1,0) -- (1,2);
            \draw (2,0) .. controls ++(0,1) and ++(0,1) .. (3,0);
            \draw (2,2) .. controls ++(0,-1) and ++(0,-1) .. (3,2);
        \end{tikzpicture}
        }, \tikzinline{
        \begin{tikzpicture}[scale=0.3]
            \draw[dashed] (0,0) rectangle (4,2);
            \draw (3,0) .. controls ++(0,1) and ++(0,-1) .. (1,2);
            \draw (1,0) .. controls ++(0,1) and ++(0,1) .. (2,0);
            \draw (2,2) .. controls ++(0,-1) and ++(0,-1) .. (3,2);
        \end{tikzpicture}
        }, \tikzinline{
        \begin{tikzpicture}[scale=0.3]
            \draw[dashed] (0,0) rectangle (4,2);
            \draw (1,0) .. controls ++(0,1) and ++(0,-1) .. (3,2);
            \draw (2,0) .. controls ++(0,1) and ++(0,1) .. (3,0);
            \draw (1,2) .. controls ++(0,-1) and ++(0,-1) .. (2,2);
        \end{tikzpicture}
        }, \\ &\tikzinline{
        \begin{tikzpicture}[scale=0.3]
            \draw[dashed] (0,0) rectangle (4,2);
            \draw (1,0) .. controls ++(0,1) and ++(0,-1) .. (2,2);
            \draw (2,0) .. controls ++(0,1) and ++(0,-1) .. (1,2);
            \draw (3,0) -- (3,2);
            \node[draw,circle,fill,scale=0.2] (A) at (1.5,1) {$ $};
        \end{tikzpicture}
        }, \tikzinline{
        \begin{tikzpicture}[scale=0.3]
            \draw[dashed] (0,0) rectangle (4,2);
            \draw (2,0) .. controls ++(0,1) and ++(0,-1) .. (3,2);
            \draw (3,0) .. controls ++(0,1) and ++(0,-1) .. (2,2);
            \draw (1,0) -- (1,2);
            \node[draw,circle,fill,scale=0.2] (A) at (2.5,1) {$ $};
        \end{tikzpicture}
        }, \tikzinline{
        \begin{tikzpicture}[scale=0.3]
            \draw[dashed] (0,0) rectangle (4,2);
            \draw (1,0) -- (1,2);
            \draw (2,2) .. controls ++(0,-1) and ++(0,-1) .. (3,2);
            \draw (2,0) .. controls ++(0,0.5) and ++(0.5,0) .. (1,1);
            \draw (3,0) .. controls ++(0,0.5) and ++(0.5,0) .. (1,1);
            \node[draw,circle,fill,scale=0.2] (A) at (1,1) {$ $};
        \end{tikzpicture}
        }, \tikzinline{
        \begin{tikzpicture}[scale=0.3]
            \draw[dashed] (0,0) rectangle (4,2);
            \draw (1,2) .. controls ++(0,-1) and ++(0,-1) .. (2,2);
            \draw (1,0) .. controls ++(0,0.5) and ++(-0.5,0) .. (3,1);
            \draw (2,0) .. controls ++(0,0.5) and ++(-0.5,0) .. (3,1);
            \draw (3,0) -- (3,2);
            \node[draw,circle,fill,scale=0.2] (A) at (3,1) {$ $};
        \end{tikzpicture}
        }, \tikzinline{
        \begin{tikzpicture}[scale=0.3]
            \draw[dashed] (0,0) rectangle (4,2);
            \draw (1,0) .. controls ++(0,1) and ++(0,-1) .. (3,2);
            \draw (3,0) .. controls ++(0,1) and ++(0,-1) .. (1,2);
            \draw (2,0) -- (2,2);
            \node[draw,circle,fill,scale=0.2] (A) at (2,1) {$ $};
        \end{tikzpicture}
        }, \\ & \tikzinline{
        \begin{tikzpicture}[scale=0.3]
            \draw[dashed] (0,0) rectangle (4,2);
            \draw (1,0) -- (1,2);
            \draw (2,0) .. controls ++(0,1) and ++(0,1) .. (3,0);
            \draw (2,2) .. controls ++(0,-0.5) and ++(0.5,0) .. (1,1);
            \draw (3,2) .. controls ++(0,-0.5) and ++(0.5,0) .. (1,1);
            \node[draw,circle,fill,scale=0.2] (A) at (1,1) {$ $};
        \end{tikzpicture}
        }, \tikzinline{
        \begin{tikzpicture}[scale=0.3]
            \draw[dashed] (0,0) rectangle (4,2);
            \draw (1,0) .. controls ++(0,1) and ++(0,1) .. (2,0);
            \draw (1,2) .. controls ++(0,-0.5) and ++(-0.5,0) .. (3,1);
            \draw (2,2) .. controls ++(0,-0.5) and ++(-0.5,0) .. (3,1);
            \draw (3,0) -- (3,2);
            \node[draw,circle,fill,scale=0.2] (A) at (3,1) {$ $};
        \end{tikzpicture}
        }, \tikzinline{
        \begin{tikzpicture}[scale=0.3]
            \draw[dashed] (0,0) rectangle (4,2);
            \draw (1,0) -- (1,2);
            \draw (2,0) .. controls ++(0,0.5) and ++(0.5,0) .. (1,1);
            \draw (2,2) .. controls ++(0,-0.5) and ++(-0.5,0) .. (3,1);
            \draw (3,0) -- (3,2);
            \node[draw,circle,fill,scale=0.2] (A) at (1,1) {$ $};
            \node[draw,circle,fill,scale=0.2] (B) at (3,1) {$ $};
        \end{tikzpicture}
        }, \tikzinline{
        \begin{tikzpicture}[scale=0.3]
            \draw[dashed] (0,0) rectangle (4,2);
            \draw (1,0) -- (1,2);
            \draw (2,0) .. controls ++(0,0.5) and ++(-0.5,0) .. (3,1);
            \draw (2,2) .. controls ++(0,-0.5) and ++(0.5,0) .. (1,1);
            \draw (3,0) -- (3,2);
            \node[draw,circle,fill,scale=0.2] (A) at (1,1) {$ $};
            \node[draw,circle,fill,scale=0.2] (B) at (3,1) {$ $};
        \end{tikzpicture}
        }, \tikzinline{
        \begin{tikzpicture}[scale=0.3]
            \draw[dashed] (0,0) rectangle (4,2);
            \draw (1,0) .. controls ++(0,0.5) and ++(-0.5,0) .. (2,0.7);
            \draw (1,2) .. controls ++(0,-0.5) and ++(-0.5,0) .. (2,1.3);
            \draw (2,0) -- (2,0.7);
            \draw (2,2) -- (2,1.3);
            \draw (3,0) .. controls ++(0,0.5) and ++(0.5,0) .. (2,0.7);
            \draw (3,2) .. controls ++(0,-0.5) and ++(0.5,0) .. (2,1.3);
            \node[draw,circle,fill,scale=0.2] (A) at (2,0.7) {$ $};
            \node[draw,circle,fill,scale=0.2] (A) at (2,1.3) {$ $};
        \end{tikzpicture}
        }
        \}
    \end{align*}
    \caption{The chromatic basis $B_3$.}
    \label{fig:b3-diagrams}
\end{figure}

\section{Dimension of the Chromatic Algebra}
\label{sec:dimension}

Although it is apparent that the basis $B_n$ consists of finitely many chromatic diagrams, the size of this set has not been computed before. 

In this section, we introduce a presentation of the chromatic algebra in terms of partitions. In particular, the diagrams in $B_n$ are in one-to-one correspondence with the noncrossing partitions of $2n$ points without singletons. This bijection was suggested in \cite[Section~4]{fendley2010link}, but a proof was not provided. Through this presentation, we determine that the dimension of the $n$th-order chromatic algebra $\mathcal{C}_{n}$ is the $2n$th Riordan number $R_{2n}$.

We now introduce the concepts of noncrossing partitions and singletons.
    A \emph{partition} of an $n$-element set is a division of $n$ elements into \emph{classes} (also called \emph{blocks}), where each element is assigned to exactly one class. Here the elements of a partition are vertices on a circle. For convenience, we number the vertices 1 through $n$ in counterclockwise order, starting from the bottom of the circumference, and assume they are evenly spaced.
    The \emph{graph of a partition}, or the \emph{circular representation} of the partition, is formed by cyclically joining the vertices of each block with edges, and is well-defined up to isotopy. A partition is \emph{noncrossing} if none of the resulting edges intersect. 
    A \emph{singleton} is a block of a partition containing only one element of the set.

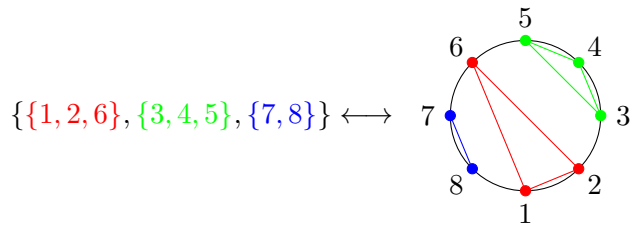
\begin{figure}[H]
    \begin{align*}
        \left\{ \textcolor{red}{\left\{1, 2, 6\right\}}, \textcolor{green}{\left\{ 3, 4, 5 \right\}}, \textcolor{blue}{\left\{ 7, 8\right\}} \right\} \longleftrightarrow
        \tikzinline{
        \begin{tikzpicture}
            \draw (0,0) circle[radius=1];
            \node[draw,circle,fill,scale=0.2,red] (A) at (0,-1) {1};
            \draw (0,-1.3) node {1};
            \node[draw,circle,fill,scale=0.2,red] (B) at (0.707,-0.707) {2};
            \draw (0.9191,-0.9191) node {2};
            \node[draw,circle,fill,scale=0.2,green] (C) at (1,0) {3};
            \draw (1.3,0) node {3};
            \node[draw,circle,fill,scale=0.2,green] (D) at (0.707,0.707) {4};
            \draw (0.9191,0.9191) node {4};
            \node[draw,circle,fill,scale=0.2,green] (E) at (0,1) {5};
            \draw (0,1.3) node {5};
            \node[draw,circle,fill,scale=0.2,red] (F) at (-0.707,0.707) {6};
            \draw (-0.9191,0.9191) node {6};
            \node[draw,circle,fill,scale=0.2,blue] (G) at (-1,0) {7};
            \draw (-1.3,0) node {7};
            \node[draw,circle,fill,scale=0.2,blue] (H) at (-0.707,-0.707) {8};
            \draw (-0.9191,-0.9191) node {8};
            \draw[red] (A) -- (B) -- (F) -- (A);
            \draw[green] (C) -- (D) -- (E) -- (C);
            \draw[blue] (G) -- (H);
        \end{tikzpicture}
        }
    \end{align*}
    \caption{A (noncrossing) partition of 8 points and its circular representation.}
    \label{fig:partition-circular-representation}
\end{figure}

Now, we claim the following property.

\begin{theorem*}[Theorem~\ref{thm:dimension}]
    The dimension of the chromatic algebra $\mathcal{C}_n$ is the $2n$th Riordan number $R_{2n}$.
\end{theorem*}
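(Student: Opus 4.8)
The plan is to prove the theorem in two stages: first establish a bijection between the basis $B_n$ and the set of noncrossing partitions of $2n$ points with no singleton blocks, and then enumerate the latter, showing that there are exactly $R_{2n}$ of them. Since the proposition quoted above guarantees that $B_n$ is a basis, we will have $\dim \mathcal{C}_n = |B_n|$, so the whole argument reduces to these two combinatorial facts. For the bijection I would first fix the identification of the $2n$ boundary points of a diagram with $2n$ points placed cyclically on a circle: traversing the boundary of the rectangle counterclockwise lists the bottom points left-to-right and then the top points right-to-left, and I take this to be the cyclic order used in the circular representation of partitions. Given a diagram $D \in B_n$, I define its partition by declaring two boundary points equivalent when they lie in the same connected component of the embedded graph. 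Because $D$ has no inner edges, every edge meets a boundary point, so any component containing an inner vertex consists of a single inner vertex joined by outer edges to a set of boundary points (a ``star''), while every other component is a single edge joining two boundary points; thus the block of a boundary point is exactly its connected component restricted to the boundary.

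I would then verify the two defining features of the target set. No block is a singleton: each boundary point is $1$-valent, so its unique edge reaches either another boundary point (a block of size two) or an inner vertex, and in a reduced basis diagram every inner vertex has valency at least three (valency one forces $D = 0$ by relation (4), valency two is absorbed by relation (3), and an isolated inner vertex carries no boundary point and is discarded), so every block has size at least two. The partition is noncrossing because the graph is embedded in the disk bounded by the rectangle with no edge crossings, which prevents two blocks from interleaving along the boundary circle. For the inverse map, given a noncrossing partition of the $2n$ points with all blocks of size $\geq 2$, I would reconstruct a diagram by drawing, for each block of size $\geq 3$, one interior vertex joined by outer edges to the points of that block, and, for each block of size two, a single arc. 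The noncrossing hypothesis guarantees these stars can be drawn pairwise disjointly inside the rectangle, and the result lies in $B_n$. The only nontrivial point in checking that these two constructions are mutually inverse up to isotopy is the uniqueness of the planar star realization of a noncrossing partition, which follows from the uniqueness up to isotopy of a planar embedding of an outerplanar graph with a prescribed cyclic order on its boundary vertices.

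For the enumeration, let $S_m$ denote the number of noncrossing partitions of $m$ points with no singletons, so that $|B_n| = S_{2n}$. The cleanest route is to relate $S_m$ to the Catalan numbers through the singletons. Any noncrossing partition of $[m]$ is obtained uniquely by choosing the subset of its non-singleton points, say of size $k$, equipping it with a singleton-free noncrossing partition, and declaring the remaining $m-k$ points singletons; since deleting points preserves the induced cyclic order, and since singletons never cross any block, this is a bijection, giving $C_m = \sum_{k=0}^{m} \binom{m}{k} S_k$. Applying the inverse binomial transform yields $S_m = \sum_{k=0}^{m} (-1)^{m-k}\binom{m}{k} C_k$, which is the standard closed form for the Riordan numbers (equivalently, one may pass to generating functions and recover $R(x) = \tfrac{1 + x - \sqrt{1 - 2x - 3x^2}}{2x(1+x)}$ from the Catalan generating function). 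Hence $S_m = R_m$, and in particular $\dim \mathcal{C}_n = |B_n| = S_{2n} = R_{2n}$.

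I expect the main obstacle to lie in the rigor of the bijection rather than in the counting. The delicate points are: matching the cyclic boundary order of the rectangle to the noncrossing condition so that planarity corresponds exactly to noncrossingness; pinning down the reduced representatives of $B_n$ precisely enough (via relations (3) and (4)) that the singleton-free, valency-$\geq 3$ configurations are the ones in bijection with the partitions; and proving that the planar star realization of a noncrossing partition is unique up to isotopy, so that the inverse map is well defined. Once the bijection is secured, the enumeration is a routine binomial-transform computation.
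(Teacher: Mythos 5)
Your proposal is correct, and its skeleton is the same as the paper's: identify $B_n$ with the singleton-free noncrossing partitions of $2n$ cyclically ordered points (blocks given by connected components, inverse given by drawing a star inside each block), then count that set. The differences are in two sub-steps, and both are worth noting. First, for well-definedness and injectivity of the bijection, the paper fixes a labeling convention for vertices and proves (Lemmas~\ref{lem:adjacency-matrix-equivalence} and~\ref{lem:injective-partitions}) that a diagram in $B_n$ is determined by its adjacency matrix, whereas you invoke uniqueness up to isotopy of planar embeddings of outerplanar graphs with prescribed boundary cyclic order; your route is cleaner but leans on a quoted embedding fact, while the paper's is more hands-on. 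Your explicit treatment of degenerate inner vertices (valency $\leq 2$) is in fact more careful than the paper's, which implicitly assumes basis diagrams have all inner vertices of valency at least $3$. Second, and more substantially, the enumeration: the paper simply cites Bernhart's table for the fact that singleton-free noncrossing partitions of $m$ points are counted by $R_m$, whereas you derive it yourself via the bijective decomposition $C_m = \sum_{k=0}^{m}\binom{m}{k}S_k$ (strip out the singletons) and binomial inversion $S_m = \sum_{k=0}^{m}(-1)^{m-k}\binom{m}{k}C_k$, which is the standard closed form for $R_m$. This makes the argument self-contained and explains \emph{why} Riordan numbers appear at all (they are the inverse binomial transform of the Catalan numbers); the only loose end is reconciling that closed form with the paper's recursive definition of $R_n$ -- note in passing that the paper's stated initial conditions $R_1 = 1$, $R_2 = 0$ are shifted by one from the standard $R_0 = 1$, $R_1 = 0$, without which the recurrence does not even produce integers -- but that reconciliation is a standard generating-function check, as you indicate.
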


To prove this result, we make several conventions.

\begin{definition}
    We define a mapping $\alpha$ from $B_n$ to the set of partitions of $2n$ points as follows. Let $b$ be a diagram in $B_n$, and let $S$ be the set of its $2n$ boundary points. For each connected component in $b$, let the boundary points it connects form one block in the partition of $S$. Then, $\alpha(b)$ is the resulting partition of $S$.
\end{definition}

\begin{definition}
    For any positive integer $m$, let $P_{m}$ denote the denote the set of noncrossing partitions without singletons of $m$ points.
\end{definition}

\begin{lemma}
    The image of $\alpha$ lies within $P_{2n}$.
\end{lemma}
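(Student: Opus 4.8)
The plan is to verify separately the two defining properties of membership in $P_{2n}$: that every block of $\alpha(b)$ has size at least two (no singletons), and that no two blocks cross (noncrossing). First I would fix a cyclic labeling of the $2n$ boundary points by traversing the rectangle's boundary---say the $n$ bottom points left to right followed by the $n$ top points right to left---so that $\alpha(b)$ is literally a partition of points arranged on a circle, matching the circular representation. I would also record at the outset that $\alpha(b)$ is a genuine partition: each boundary point lies in exactly one connected component of $b$, hence in exactly one block.

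For the no-singletons claim, I would exploit that $b \in B_n$ has no inner edges, together with the requirement that every boundary point is $1$-valent. The key structural observation is that a connected component of $b$ can contain at most one inner vertex: any edge incident to an inner vertex must, lacking inner edges, have its other endpoint on the boundary, and a boundary point, being $1$-valent, cannot be traversed by a path. Thus each component is either a single edge joining two distinct boundary points or a star consisting of one inner vertex joined to several boundary points, so in either case the corresponding block meets at least two boundary points. The sole degenerate configuration, an inner vertex of valency one joined to a single boundary point, I would dispatch by invoking the proposition establishing relation (4), which shows such a diagram equals $0$ in $\mathcal{C}_n$ and so is not among the basis diagrams of $B_n$. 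Consequently no block is a singleton.

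The heart of the proof is the noncrossing property, which I would establish by a Jordan-curve argument after deforming the rectangle into a disk $D$ whose boundary circle carries the $2n$ points in the chosen cyclic order. Suppose toward a contradiction that two distinct blocks cross, so that there are boundary points $a_1, a_2$ of one block and $b_1, b_2$ of another appearing in the cyclic order $a_1, b_1, a_2, b_2$. Since $a_1$ and $a_2$ lie in one connected component, I can choose a path $\gamma_A$ between them through the interior of $D$; because boundary points are $1$-valent, $\gamma_A$ meets $\partial D$ only at $a_1$ and $a_2$. Then $\gamma_A$ divides $D$ into two regions whose closures contain $b_1$ and $b_2$ respectively. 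A path $\gamma_B$ joining $b_1$ and $b_2$ within the other component must therefore cross $\gamma_A$; as its interior lies in the open disk, it cannot escape across the boundary arcs, so the crossing occurs with $\gamma_A$ itself. But $\gamma_A$ and $\gamma_B$ belong to different components and hence share no vertex, so such a crossing is a genuine intersection of edges, contradicting the planarity of the embedded diagram $b$.

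I expect the main obstacle to be making this topological separation fully rigorous: precisely justifying that $\gamma_B$ must intersect $\gamma_A$ rather than leaking through the boundary circle, and that this intersection is a forbidden edge crossing rather than a permissible shared vertex. Once the cyclic order and the $1$-valence of the boundary points are used to confine both paths to meet $\partial D$ only at their endpoints, the Jordan curve theorem supplies the separation and the contradiction follows, and the remaining steps are routine.
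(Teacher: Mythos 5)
Your proposal is correct and takes essentially the same approach as the paper: it verifies the two defining properties of $P_{2n}$ directly, getting no singletons from the $1$-valence of boundary points together with the absence of inner edges, and getting the noncrossing property from the disjointness and planarity of the connected components. If anything, your write-up is more careful than the paper's two-sentence proof: the paper silently assumes away the degenerate configuration of a $1$-valent inner vertex attached to a single boundary point (which you dispatch via relation (4)), and it asserts that non-intersecting components yield non-crossing blocks without the Jordan-curve separation argument that you supply.
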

\begin{proof}
    Let $b$ be an element of $B_n$. Because each boundary point of $b$ must exist as part of a connected component with at least one other boundary point, $\alpha(b)$ has no singletons. Now, consider the circular representation of $\alpha(b)$. Because no two connected components of $b$ intersect, no two blocks in $\alpha(b)$ intersect, so the partition $\alpha(b)$ must be noncrossing. Because $\alpha(b)$ is necessarily noncrossing and without singletons, it is an element of $P_{2n}$.
\end{proof}

Henceforth, we treat $\alpha$ as a mapping from $B_n$ to $P_{2n}$. Next, we show that $\alpha$ is bijective, establishing the one-to-one correspondence between these two sets.

The \emph{adjacency matrix} of a labeled graph is a binary square matrix with rows and columns labeled by graph vertices, with position $(v_i,v_j)$ equal to the number of edges between vertices $v_i$ and $v_j$. We extend this notion to $n$th-order chromatic diagrams by introducing a well-defined labeling convention for their vertices. We label boundary points as vertices $1$ through $2n$, beginning from the bottom left and continuing in counterclockwise order. We then label the inner vertices with consecutively increasing integers, beginning with $2n+1$, in the order of the smallest-number boundary point they connect to. This labeling convention is illustrated in Figure~\ref{fig:vertex-labeling-order}.

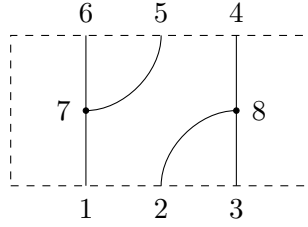
\begin{figure}[H]
    \begin{center}
        \begin{tikzpicture}
            \draw[dashed] (0,0) rectangle (4,2);
            \draw (1,0) -- (1,2);
            \draw (2,0) .. controls ++(0,0.5) and ++(-0.5,0) .. (3,1);
            \draw (2,2) .. controls ++(0,-0.5) and ++(0.5,0) .. (1,1);
            \draw (3,0) -- (3,2);
            \node[draw,circle,fill,scale=0.2] (A) at (1,1) {$ $};
            \node[draw,circle,fill,scale=0.2] (B) at (3,1) {$ $};
            \draw (1,-0.3) node {1};
            \draw (2,-0.3) node {2};
            \draw (3,-0.3) node {3};
            \draw (1,2.3) node {6};
            \draw (2,2.3) node {5};
            \draw (3,2.3) node {4};
            \draw (0.7,1) node {7};
            \draw (3.3,1) node {8};
        \end{tikzpicture}
    \end{center}
    \vspace{-20px}
    \caption{Labels of the boundary points and vertices of a third-order chromatic diagram.}
    \label{fig:vertex-labeling-order}
\end{figure}

\begin{lemma}
    \label{lem:adjacency-matrix-equivalence}
    Two chromatic diagrams in $B_n$ are equivalent if and only if their adjacency matrices are the same.
\end{lemma}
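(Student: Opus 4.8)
The plan is to regard the adjacency matrix as a function defined on isotopy classes of diagrams in $B_n$, proving the forward implication as well-definedness of this function and the reverse implication as its injectivity. As a preliminary step I would record the rigid structure of diagrams in $B_n$: since there are no inner edges and every boundary point is $1$-valent, each edge meeting an inner vertex must have its other endpoint on the boundary (otherwise it would be an inner edge or a loop at an inner vertex, both excluded), and no path can join two distinct inner vertices, because the first step of such a path would already land on a boundary point, which cannot carry the valency $\geq 2$ needed to continue. Hence every connected component is either a single edge between two boundary points or a star formed by one inner vertex joined to several boundary points, and (excluding the vacuous case of an isolated inner vertex) every inner vertex attaches to at least one boundary point, so the canonical labeling rule always applies.

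For the forward direction I would argue that the canonical labeling is itself an isotopy invariant. The boundary points occupy fixed positions on the rectangle, so any isotopy fixes them together with their labels $1,\dots,2n$, and the inner vertices are then labeled by the smallest boundary point to which they are joined. Because a $1$-valent boundary point attaches to at most one inner vertex, distinct inner vertices never share the same minimal neighbor, so this rule yields a unique labeling depending only on the connectivity. Since isotopy preserves connectivity, equivalent diagrams receive identical labels, and then isotopy-invariance of the incidence data forces their adjacency matrices to coincide.

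The reverse direction carries the real content. Equal adjacency matrices mean that $b_1$ and $b_2$ have the same labeled graph: the same boundary–boundary edges, the same boundary–inner incidences, and the same canonically labeled inner vertices. Thus they are two noncrossing planar embeddings, with boundary points in the fixed cyclic order, of one and the same graph $G$, which by the structural step is a disjoint union of edges and stars. By the earlier lemma, the induced partition of the $2n$ boundary points is noncrossing, so its blocks are nested. I would then show that such an embedding is unique up to isotopy by peeling off an \emph{exposed} block, namely one whose boundary points are cyclically consecutive, which must exist in any noncrossing partition, drawing its edge or star in a standard position adjacent to the boundary, and recursing on the remaining points; since a star's inner vertex may be slid to any point of its contractible region, the resulting standard form depends only on $G$. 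As both $b_1$ and $b_2$ are isotopic to this common standard form, they are isotopic to each other.

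The main obstacle is exactly this last upgrade from ``same abstract graph'' to ``isotopic embedding,'' because a planar graph can in general admit inequivalent planar embeddings; the argument must genuinely exploit the noncrossing condition and the fixed boundary positions to pin down the embedding. Formalizing the peeling induction — verifying that an exposed block always exists, that its removal preserves the noncrossing property, and that the freedom in placing inner vertices does not affect the isotopy class — is the technical heart, and the nested (laminar) structure of noncrossing partitions is the property that makes it go through.
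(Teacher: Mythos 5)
Your proof is correct, and for the substantive direction it takes a genuinely different route from the paper's. On the forward direction (equivalence implies equal matrices) the two arguments essentially coincide, though you are more careful: your observation that $1$-valency of boundary points prevents two inner vertices from sharing a minimal neighbor is exactly what makes the paper's labeling convention well defined, a point the paper glosses over. For the reverse direction, the paper argues by contrapositive using faces: since boundary points are $1$-valent and there are no inner edges, every interior face has a boundary segment on its perimeter; the paper then walks counterclockwise around the face attached to each of the $2n$ boundary segments and asserts that non-isotopic diagrams must produce different walk sequences, hence different adjacency matrices. You instead argue directly: decompose the diagram into boundary--boundary edges and stars, invoke the noncrossing property of the induced partition (legitimately, since that lemma precedes this one), and show the embedding is unique up to isotopy by peeling exposed blocks of cyclically consecutive points and recursing on the laminar structure. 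Both proofs ultimately rest on the same topological fact --- that with the boundary fixed, the connectivity data pins down the isotopy class --- but your version localizes that fact in an explicit induction with a canonical form, whereas the paper's key sentence (``the sequences of edges in some walk must differ'') is asserted rather than proved and is nearly a restatement of the claim. Your sketch leaves the peeling induction to be formalized, as you acknowledge, but it identifies the technical heart more honestly; the paper's walk argument buys brevity at the cost of rigor at precisely that step.
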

\begin{proof}
    First, note that self-looping edges are inner edges and thus not permitted.

    The only if direction is straightforward: because the vertices' order is fixed, two diagrams with different adjacency matrices must have different sets of edges. Therefore, they cannot be equivalent.

    We now prove the if direction. Let $G$ and $G'$ be two distinct chromatic diagrams. Note that because these diagrams belong to $B_n$, they cannot have inner edges. Moreover, because each boundary point can only connect to one edge, every outer edge must be adjacent to a segment of the rectangular boundary. Therefore, every face of the chromatic diagram that lies within the rectangle must contain one boundary edge in its perimeter. Now we consider, in counterclockwise order, the $2n$ segments of the boundary, beginning with the segment between points $1$ and $2$. For each boundary segment, initiate a counterclockwise walk that includes all edges in the face that connects to this boundary segment, beginning with itself. If $G$ and $G'$ are distinct, the sequences of edges in some walk must differ between the two. Therefore, there must exist a connection in $G$ which does not exist in $G'$, or vice versa. Then, the adjacency matrices of $G$ and $G'$ must differ.
\end{proof}

\begin{lemma}
    \label{lem:injective-partitions}
    The mapping $\alpha : B_n \to P_{2n}$ is injective.
\end{lemma}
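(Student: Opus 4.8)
The plan is to reduce the claim to Lemma~\ref{lem:adjacency-matrix-equivalence}, which asserts that two diagrams in $B_n$ are equivalent exactly when their adjacency matrices agree. It therefore suffices to show that the partition $\alpha(b)$ determines the adjacency matrix of $b$: if $\alpha(b) = \alpha(b')$, then $b$ and $b'$ must have the same adjacency matrix and hence be equivalent.

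First I would establish the structural fact that pins down each connected component of a diagram $b \in B_n$. Since $b$ has no inner edges and every boundary point is $1$-valent, no two inner vertices can be joined: not directly, as that would be an inner edge, and not through a boundary point, as that point would then be $2$-valent. Consequently every connected component of $b$ contains at most one inner vertex. A component with no inner vertex is a single edge joining two boundary points, and a component with exactly one inner vertex is a star in which that vertex is joined to every boundary point of the component (and, by the singleton-free condition, to at least two of them). In the reduced diagrams of $B_n$ a two-point block is realized by a single edge, since a star on two points would introduce a removable $2$-valent vertex; thus the size of a block determines whether its component carries an inner vertex.

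Next I would observe that $\alpha(b)$ records precisely the partition of the $2n$ boundary points into connected components. Combined with the structural fact, this means the blocks of $\alpha(b)$ determine both which pairs of boundary points are joined by a direct edge (the $2$-element blocks) and which collections of boundary points share a common inner vertex (the blocks of size at least $3$). The labeling convention assigns inner vertices the labels $2n+1, 2n+2, \dots$ in order of the smallest boundary point in their block, so the partition also fixes the label of each inner vertex. Hence every entry of the adjacency matrix, boundary-to-boundary and boundary-to-inner alike, is determined by $\alpha(b)$ alone.

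Finally, if $\alpha(b) = \alpha(b')$, then by the previous step $b$ and $b'$ have identical adjacency matrices, so Lemma~\ref{lem:adjacency-matrix-equivalence} yields $b = b'$, giving injectivity. I expect the structural step to be the main obstacle: the argument hinges on ruling out components with more than one inner vertex and on justifying the canonical realization of each block, especially the two-point case, since this is exactly what guarantees that the partition \emph{reconstructs} the diagram rather than merely constraining it.
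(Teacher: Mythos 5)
Your proposal is correct and takes essentially the same route as the paper: both arguments show that the partition $\alpha(b)$ determines the full adjacency matrix (blocks of size at least $3$ giving the inner vertices and their incident outer edges, $2$-element blocks giving the direct strands) and then invoke Lemma~\ref{lem:adjacency-matrix-equivalence} to conclude equivalence. The only difference is that you spell out structural facts (at most one inner vertex per component, two-point blocks realized as single edges) that the paper's proof leaves implicit.
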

\begin{proof}
    We prove by contradiction. Suppose that there exist two chromatic diagrams $a$ and $b$ that map to the same partition through $\alpha$. Because the number of inner vertices corresponds to the number of blocks in the partition with size at least 3, $a$ and $b$ must have the same number of inner vertices. Moreover, for each inner vertex, $a$ and $b$ must have the same subset of boundary points connected to that vertex by outer edges. Therefore, the two diagrams have the same set of outer edges.
    
    Because there are no inner edges in the chromatic diagrams in $B_n$, the number of inner vertices and the set of outer edges, taken together, completely determine the adjacency matrix of any chromatic diagram in $B_n$. Therefore, $a$ and $b$ have the same adjacency matrix, so by Lemma~\ref{lem:adjacency-matrix-equivalence}, they are equivalent.
\end{proof}

\begin{lemma}
    \label{lem:surjective-partitions}
    The mapping $\alpha : B_n \to P_{2n}$ is surjective.
\end{lemma}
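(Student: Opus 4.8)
The plan is to prove surjectivity constructively: given an arbitrary noncrossing partition $\pi \in P_{2n}$, I will build an explicit diagram $b \in B_n$ with $\alpha(b) = \pi$. First I would identify the $2n$ abstract points of $\pi$ with the $2n$ boundary points of the rectangle using the labeling convention of Figure~\ref{fig:vertex-labeling-order}, so that the cyclic order of the points on the circle matches the cyclic order of the boundary points around the rectangle. The construction of $b$ then proceeds block by block: for each block $C$ of $\pi$ with $|C| = 2$, join its two boundary points by a single edge; for each block with $|C| = k \geq 3$, place one inner vertex in the interior of the rectangle and connect it by an outer edge to each of the $k$ boundary points of $C$. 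Since $\pi$ has no singletons, every boundary point is used exactly once, so each is $1$-valent, and every edge has at least one endpoint on the boundary, so $b$ has no inner edges; hence $b \in B_n$ once we know it is a legitimate (planar) chromatic diagram. By construction the connected components of $b$ are precisely the blocks of $\pi$, giving $\alpha(b) = \pi$.

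The step requiring real care is verifying that this construction can be realized as a planar embedding, that is, that the edges can be drawn inside the rectangle without crossings. Here I would use the noncrossing hypothesis in its geometric form. Passing to the disk model (the filled rectangle is homeomorphic to a closed disk via a homeomorphism preserving the cyclic order of the boundary points), consider the circular representation of $\pi$, namely the inscribed polygon spanned by each block. The key fact I would establish is that distinct blocks of a noncrossing partition span polygons with pairwise disjoint interiors: since all points lie on the boundary circle, the vertices of one block are never strictly interior to another block's inscribed polygon, and the noncrossing condition is exactly the statement that the polygon boundaries (which are unions of the representation's chords) do not cross. For each block of size $\geq 3$ I would place its inner vertex in the interior of its convex polygon and draw the connecting edges as the diagonals from that vertex, which stay inside the polygon; for each block of size $2$ the single edge is that polygon's lone chord. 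Because the polygonal regions are pairwise disjoint, edges arising from different blocks cannot cross, and within a single block the diagonals of a convex polygon from an interior point are crossing-free. This yields the desired planar chromatic diagram.

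The main obstacle is precisely this translation from the combinatorial noncrossing condition to a concrete crossing-free embedding; everything else (the $1$-valence of boundary points, the absence of inner edges, and the identity $\alpha(b) = \pi$) is immediate from the construction. If the region-disjointness argument proves awkward to state cleanly, an alternative is to induct on the number of blocks using the standard recursive structure of noncrossing partitions: the block containing the first point cuts the circle into arcs, each arc carries a noncrossing sub-partition by the noncrossing property, and one embeds each sub-partition recursively inside its arc. Either route establishes that $\alpha$ is surjective, which together with Lemma~\ref{lem:injective-partitions} gives the bijection $B_n \leftrightarrow P_{2n}$.
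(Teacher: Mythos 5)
Your proposal is correct and follows essentially the same route as the paper: both construct an explicit right inverse of $\alpha$ by placing a central vertex inside each block of the circular representation, star-connecting it to the block's boundary points (with size-$2$ blocks becoming single strands), invoking the noncrossing property for planarity, and then morphing the disk to the rectangle. Your treatment of the planarity step (disjointness of the inscribed polygonal regions) is in fact spelled out more carefully than in the paper, which simply asserts that non-intersecting blocks yield non-intersecting edges.
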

\begin{proof}
    First, we establish a mapping $\beta$ from $P_{2n}$ to $B_n$. It is defined as follows: Let $p$ be a noncrossing partition without singletons of $2n$ points. Consider the circular representation of $p$. Define, for each block of this partition, a central vertex enclosed by the edges of the block. Draw edges from this central vertex to each point in its block such that each newly constructed edge lies within or on the block. Then, remove the edges cyclically joining the points in the block. Because no two blocks intersect, the new edges will not intersect either, so the resulting vertices and edges form a plane graph with $2n$ boundary points on the circle. Morph the boundary from a circle to the rectangle of a chromatic diagram, numbering the leftmost bottom boundary point ``1" and subsequently numbering in counterclockwise order. The mapping results in a chromatic diagram without inner edges, so $\beta(p)$ is necessarily an element of $B_n$.

    Each step in this mapping is well defined up to isotopy, and since elements of the codomain $B_n$ are only unique up to isotopy, this mapping is well defined.
    
    In $\beta(p)$ for $p \in P$, notice that the boundary points which belong to the same component are precisely the points which belong to the same block in the circular representation of $p$, so $\alpha(\beta(p))=p$ for every $p \in P$. Therefore, $\alpha$ is surjective.
\end{proof}

Now, we prove the one-to-one correspondence between diagrams in $B_n$ and noncrossing partitions of $2n$ points without singletons.
\begin{lemma}
    \label{lem:bijective-partitions}
    The mapping $\alpha : B_n \to P_{2n}$ is bijective.
\end{lemma}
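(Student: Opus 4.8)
The plan is immediate: a map between sets is bijective precisely when it is both injective and surjective, so I would simply invoke the two preceding lemmas. Lemma~\ref{lem:injective-partitions} establishes that $\alpha$ is injective, and Lemma~\ref{lem:surjective-partitions} establishes that $\alpha$ is surjective. Combining these two facts yields at once that $\alpha : B_n \to P_{2n}$ is a bijection, which is exactly the claim.

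There is no remaining obstacle in this final step; all of the genuine work has already been carried out in the preparatory results. It is worth recalling where that work lived. Injectivity rested on the adjacency-matrix characterization of Lemma~\ref{lem:adjacency-matrix-equivalence}: since diagrams in $B_n$ have no inner edges, the number of inner vertices together with the set of outer edges pins down the adjacency matrix, and hence the diagram up to isotopy. Surjectivity was obtained constructively, by exhibiting the map $\beta$ that builds a diagram from any noncrossing singleton-free partition and verifying $\alpha(\beta(p)) = p$ for every $p \in P_{2n}$.

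One slightly more economical packaging I might consider is to note that the construction of $\beta$ in Lemma~\ref{lem:surjective-partitions} already gives a right inverse of $\alpha$, and that once $\alpha$ is known to be injective this right inverse is automatically a two-sided inverse; one could then conclude bijectivity by exhibiting $\beta = \alpha^{-1}$ directly. This reformulation carries no additional logical strength, so I would default to the direct appeal to injectivity and surjectivity as the cleanest route to the stated result.
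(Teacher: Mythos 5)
Your proposal is correct and matches the paper's proof exactly: both conclude bijectivity directly from Lemma~\ref{lem:injective-partitions} and Lemma~\ref{lem:surjective-partitions}. The extra remarks about where the real work occurred and the alternative inverse-map packaging are fine but add nothing beyond the paper's one-line argument.
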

%\textcolor{red}{clean up proof}
\begin{proof}
    This result follows directly from Lemma~\ref{lem:injective-partitions} and Lemma~\ref{lem:surjective-partitions}.
\end{proof}

\begin{figure}[H]
    \begin{align*}
        \tikzinline{
        \begin{tikzpicture}[scale=0.8]
            \draw[dashed] (0,0) rectangle (5,2);
            \draw[red] (1,0) .. controls ++(0,0.5) and ++(-0.5,0) .. (2,1);
            \draw[red] (2,0) -- (2,1);
            \draw[red] (3,2) .. controls ++(0,-0.5) and ++(0.5,0) .. (2,1);
            \node[draw,circle,fill,scale=0.2,red] (A) at (2,1) {$ $};
            \draw[green] (3,0) .. controls ++(0,0.5) and ++(-0.5,0) .. (4,1);
            \draw[green] (4,0) -- (4,2);
            \node[draw,circle,fill,scale=0.2,green] (B) at (4,1) {$ $};
            \draw[blue] (1,2) .. controls ++(0,-1) and ++(0,-1) .. (2,2);
            \draw (1,-0.3) node {1};
            \draw (2,-0.3) node {2};
            \draw (3,-0.3) node {3};
            \draw (4,-0.3) node {4};
            \draw (1,2.3) node {8};
            \draw (2,2.3) node {7};
            \draw (3,2.3) node {6};
            \draw (4,2.3) node {5};
        \end{tikzpicture}
        }
        \longleftrightarrow
        \tikzinline{
        \begin{tikzpicture}
            \draw (0,0) circle[radius=1];
            \node[draw,circle,fill,scale=0.2,red] (A) at (0,-1) {1};
            \draw (0,-1.3) node {1};
            \node[draw,circle,fill,scale=0.2,red] (B) at (0.707,-0.707) {2};
            \draw (0.9191,-0.9191) node {2};
            \node[draw,circle,fill,scale=0.2,green] (C) at (1,0) {3};
            \draw (1.3,0) node {3};
            \node[draw,circle,fill,scale=0.2,green] (D) at (0.707,0.707) {4};
            \draw (0.9191,0.9191) node {4};
            \node[draw,circle,fill,scale=0.2,green] (E) at (0,1) {5};
            \draw (0,1.3) node {5};
            \node[draw,circle,fill,scale=0.2,red] (F) at (-0.707,0.707) {6};
            \draw (-0.9191,0.9191) node {6};
            \node[draw,circle,fill,scale=0.2,blue] (G) at (-1,0) {7};
            \draw (-1.3,0) node {7};
            \node[draw,circle,fill,scale=0.2,blue] (H) at (-0.707,-0.707) {8};
            \draw (-0.9191,-0.9191) node {8};
            \node[draw,circle,fill,scale=0.2,red] (AA) at (0,-0.3) {89};
            \node[draw,circle,fill,scale=0.2,green] (AB) at (0.4,0.4) {89};
            \draw[red] (A) -- (AA) -- (B);
            \draw[red] (F) -- (AA);
            \draw[green] (C) -- (AB) -- (D);
            \draw[green] (E) -- (AB);
            \draw[blue] (G) -- (H);
        \end{tikzpicture}
        }
        \longleftrightarrow
        \tikzinline{
        \begin{tikzpicture}
            \draw (0,0) circle[radius=1];
            \node[draw,circle,fill,scale=0.2,red] (A) at (0,-1) {1};
            \draw (0,-1.3) node {1};
            \node[draw,circle,fill,scale=0.2,red] (B) at (0.707,-0.707) {2};
            \draw (0.9191,-0.9191) node {2};
            \node[draw,circle,fill,scale=0.2,green] (C) at (1,0) {3};
            \draw (1.3,0) node {3};
            \node[draw,circle,fill,scale=0.2,green] (D) at (0.707,0.707) {4};
            \draw (0.9191,0.9191) node {4};
            \node[draw,circle,fill,scale=0.2,green] (E) at (0,1) {5};
            \draw (0,1.3) node {5};
            \node[draw,circle,fill,scale=0.2,red] (F) at (-0.707,0.707) {6};
            \draw (-0.9191,0.9191) node {6};
            \node[draw,circle,fill,scale=0.2,blue] (G) at (-1,0) {7};
            \draw (-1.3,0) node {7};
            \node[draw,circle,fill,scale=0.2,blue] (H) at (-0.707,-0.707) {8};
            \draw (-0.9191,-0.9191) node {8};
            \draw[red] (A) -- (B) -- (F) -- (A);
            \draw[green] (C) -- (D) -- (E) -- (C);
            \draw[blue] (G) -- (H);
        \end{tikzpicture}
        }
    \end{align*}
    \caption{Illustration of the bijection $\beta$. For clarity, this diagram includes a visual intermediate step which does not explicitly appear in the definition of $\beta$.}
    \label{fig:bijection-illustration}
\end{figure}
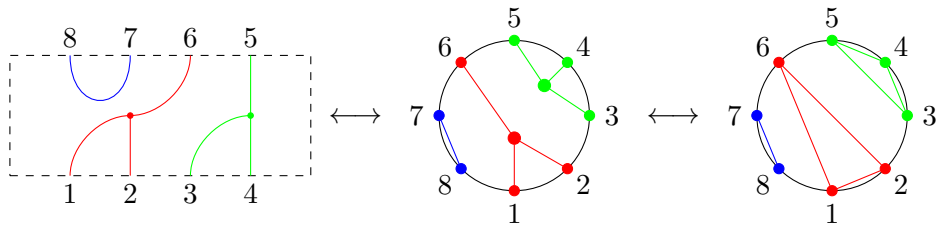

\begin{definition}[Riordan \protect{\cite[pp.~219]{riordan1975enumeration}}]
    The \emph{Riordan numbers} $R_{n}$ are an integer sequence given by $R_1=1$, $R_2=0$, and $(n+1) R_n = (n-1) (2R_{n-1} + 3R_{n-2})$ for all $n \geq 3$.
\end{definition}

\begin{remark}
    The $n$th Riordan number is given by the difference between the central coefficient and its predecessor in the trinomial expansion $(1+x+x^2)^n$, similar to how the $n$th Catalan number is the difference between the central coefficient and its predecessor in the binomial expansion $(1+x)^{2n}$ \cite[A005043]{oeis}. It turns out that these two combinatorial sequences are closely related.
\end{remark}

We now return to the proof of Theorem~\ref{thm:dimension}.
\begin{proof}[Proof of Theorem~\ref{thm:dimension}]
    By Lemma~\ref{lem:bijective-partitions}, the elements of $B_n$ are in one-to-one correspondence with the noncrossing partitions without singletons of $2n$ points. The size of this set is the Riordan number $R_{2n}$ \cite[Table~2, R3]{bernhart1999catalan}.
\end{proof}

The dimensions of small-order chromatic algebras are shown in Figure~\ref{fig:dimension}.

\begin{figure}[H]
    \begin{center}
        \begin{tabular}{|c||c|c|c|c|c|c|c|c|c|c|c|}
            \hline
            $n$ & 1 & 2 & 3 & 4 & 5 & 6 & 7 & 8 & 9 & 10 & 11
            \\ \hline\hline
            $|B_n|$ & 1 & 3 & 15 & 91 & 603 & 4213 & 30537 & 227475 & 1730787 & 13393689 & 105089229
            \\ \hline
        \end{tabular}
    \end{center}
    \caption{The dimensions of chromatic algebras of small order.}
    \label{fig:dimension}
\end{figure}

\section{Multiplicative Generating Set}
\label{sec:generating-set}

In the previous section, we discussed the chromatic basis, which is a additive generating set of the chromatic algebra. Results about the basis contribute useful information, including the dimension of the chromatic algebra and an efficient way to represent its elements.

In this section, we define a multiplicative generating set of this algebra, which contributes its own usefulness. In particular, once such a generating set is known, it becomes possible to redefine the algebra completely abstractly, in terms of abstract generator elements instead of chromatic diagrams. This would allow more studying of the chromatic algebra from an algebraic perspective, as opposed to the graph-theoretic perspective that existing literature, including our work, has used.

\begin{definition}
    Let $i,j,n$ be positive integers such that $1 \leq i < j \leq n$. Let $e_{i,j}^{n} \in B_n$ denote the diagram in which the all top and bottom boundary points between the $i$th from the left and the $j$th from the left, inclusive, are all connected to one inner vertex, and all other boundary points are paired with the corresponding boundary point on the opposite side by vertical edges. Let $E_n$ be the subset of $B_n$ containing all possible $e_{i,j}^{n}$.
\end{definition}
\begin{remark}
    Note that we implicitly allow the use of the identity in the generating set, following the same convention of the Temperley-Lieb algebra's generators \cite[(12.4.11)]{baxter2016exactly}.
\end{remark}
%\textcolor{red}{fix things related to not having identity on generating set}

Figure~\ref{fig:generator} depicts an example of an element of $E_n$, and Figure~\ref{fig:generating-set} describes $E_1$, $E_2$, and $E_3$.

\begin{figure}[H]
    \begin{center}
        \begin{tikzpicture}
            \draw[dashed] (0,0) rectangle (8,2);
            \draw (1,0) -- (1,2);
            \draw (2,0) .. controls ++(0,1) and ++(0,-1) .. (5,2);
            \draw (3,0) .. controls ++(0,1) and ++(0,-1) .. (4,2);
            \draw (4,0) .. controls ++(0,1) and ++(0,-1) .. (3,2);
            \draw (5,0) .. controls ++(0,1) and ++(0,-1) .. (2,2);
            \draw (6,0) -- (6,2);
            \draw (7,0) -- (7,2);
            \node[draw,circle,fill,scale=0.2] (A) at (3.5,1) {$ $};
        \end{tikzpicture}
    \end{center}
    \caption{The element $e_{2,5}^7$ in $E_{7}$.}
    \label{fig:generator}
\end{figure}
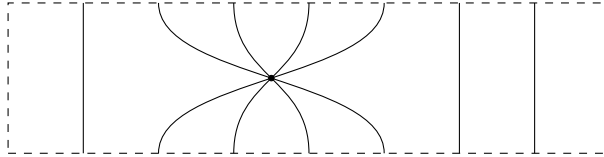

\begin{figure}[H]
    \begin{align*}
    E_1 &= \emptyset
    \\
    E_2 &= \left\{ 
    \tikzinline{
        \begin{tikzpicture}[scale=0.4]
            \draw[dashed] (0,0) rectangle (3,2);
            \draw (1,0) .. controls ++(0,1) and ++(0,-1) .. (2,2);
            \draw (2,0) .. controls ++(0,1) and ++(0,-1) .. (1,2);
            \node[draw,circle,fill,scale=0.2] (A) at (1.5,1) {$ $};
        \end{tikzpicture}
        }
    \right\}
        \\
        E_3 &= \left\{
        \tikzinline{
        \begin{tikzpicture}[scale=0.4]
            \draw[dashed] (0,0) rectangle (4,2);
            \draw (1,0) .. controls ++(0,1) and ++(0,-1) .. (2,2);
            \draw (2,0) .. controls ++(0,1) and ++(0,-1) .. (1,2);
            \draw (3,0) -- (3,2);
            \node[draw,circle,fill,scale=0.2] (A) at (1.5,1) {$ $};
        \end{tikzpicture}
        }, \tikzinline{
        \begin{tikzpicture}[scale=0.4]
            \draw[dashed] (0,0) rectangle (4,2);
            \draw (2,0) .. controls ++(0,1) and ++(0,-1) .. (3,2);
            \draw (3,0) .. controls ++(0,1) and ++(0,-1) .. (2,2);
            \draw (1,0) -- (1,2);
            \node[draw,circle,fill,scale=0.2] (A) at (2.5,1) {$ $};
        \end{tikzpicture}
        }, \tikzinline{
        \begin{tikzpicture}[scale=0.4]
            \draw[dashed] (0,0) rectangle (4,2);
            \draw (1,0) .. controls ++(0,1) and ++(0,-1) .. (3,2);
            \draw (3,0) .. controls ++(0,1) and ++(0,-1) .. (1,2);
            \draw (2,0) -- (2,2);
            \node[draw,circle,fill,scale=0.2] (A) at (2,1) {$ $};
        \end{tikzpicture}
        }
        \right\}
    \end{align*}
    \caption{The sets $E_1$, $E_2$, and $E_3$.}
    \label{fig:generating-set}
\end{figure}

Now, we claim the following property.

\begin{theorem*}[Theorem~\ref{thm:chromatic-generating-set}]
    The set $E_n$ generates $\mathcal{C}_n$.
\end{theorem*}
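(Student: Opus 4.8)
The plan is to use that $B_n$ is a basis of $\mathcal{C}_n$ (so it spans) together with the fact that the subalgebra $\langle E_n\rangle$ generated by $E_n$ (with the identity $\mathbf{1}$, which the preceding remark permits) is closed under $\mathbb{C}((Q))$-linear combinations: it suffices to show every basis diagram $b \in B_n$ lies in $\langle E_n\rangle$. Via the bijection $\alpha$ of Lemma~\ref{lem:bijective-partitions} I identify $b$ with its noncrossing partition $\alpha(b)$ without singletons, and I induct on the number of blocks $k$ of $\alpha(b)$, from small $k$ to large $k$. The base case $k=1$ is the single-block partition, which is exactly $e_{1,n}^{n} \in E_n$. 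For the inductive step I will exhibit, for each $b$ with $k$ blocks, a product (word) $w$ of elements of $E_n$ whose image in $\mathcal{C}_n$ equals $\lambda\, b + (\text{a combination of basis diagrams with fewer than } k \text{ blocks})$ for some nonzero $\lambda \in \mathbb{C}((Q))$; since the correction terms are handled by the inductive hypothesis and $w \in \langle E_n\rangle$, solving for $b$ yields $b \in \langle E_n\rangle$.

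The engine is one computation together with an \emph{all-deletion} principle. Stacking $e_{i,j}^{n}$ on itself identifies its two inner vertices through $j-i+1$ parallel inner edges; resolving these by relations (1) and (2) gives $(e_{i,j}^{n})^{2} = \mu\, e_{i,j}^{n} + \nu\, C_{i,j}$, where $C_{i,j}$ is the diagram capping the top interval $[i,j]$ and cupping the bottom interval $[i,j]$ as two separate blocks (through-strands elsewhere) and $\nu \neq 0$; in particular every $C_{i,j}$, hence every Temperley--Lieb generator $U_i = C_{i,i+1}$ and every noncrossing matching, lies in $\langle E_n\rangle$. More importantly, this exhibits the general mechanism: whenever a word $w$ is reduced to the basis by repeatedly applying $G = G/e - G\setminus e$ to the inner edges created at the interfaces between stacked generators, the unique finest term, obtained by choosing the deletion branch $-\,G\setminus e$ at every step, has coefficient $\pm 1$, while every other term involves at least one contraction $G/e$ and therefore merges two blocks into one, producing strictly fewer blocks. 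Thus if $w$ is designed so that its all-deletion term is exactly $b$, all remaining terms are automatically strictly coarser than $b$, which is precisely the form the induction demands.

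It remains to construct, for each $b$, a word $w$ whose all-deletion term is $b$. I would build $b$ from the bottom boundary upward, one block at a time, exploiting the noncrossing structure of $\alpha(b)$: because the blocks do not cross, their boundary points can be gathered by cup/cap layers (already available as products of generators by the computation above) into contiguous intervals at intermediate heights without interfering with one another. For a block I gather its bottom points into a contiguous interval and its top points into a contiguous interval, apply a single blob generator $e_{i',j'}$ to fuse them into one component through one inner vertex, and then spread the points back out by the reverse cup/cap layers; a block lying entirely on the top (or bottom) is produced instead by the detaching cap--cup $C_{i',j'}$. The interface edges created between consecutive layers are exactly the inner edges whose simultaneous deletion returns the intended diagram $b$, so $b$ is the all-deletion term of $w$.

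The main obstacle is this last step: proving that the layered gather--fuse--spread word really has $b$, and not some coarsening of it, as its all-deletion term, uniformly over all noncrossing partitions without singletons. The delicate points are (i) organizing the gathering of several interleaved blocks so that the cup/cap layers remain planar and never accidentally connect points of distinct blocks in the deletion branch, and (ii) handling blocks of size $\ge 3$ whose top and bottom parts have unequal size, where the single blob must be combined with detaching cap--cups so that the all-deletion term contains no spurious through-strand. I expect both to be manageable by an induction that peels off an outermost (or innermost) block of the noncrossing partition and applies the inductive hypothesis to the remainder, but making the interface bookkeeping precise is where the real work lies.
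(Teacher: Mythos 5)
Your inductive frame is sound and, at its core, runs on the same engine the paper uses: express a basis diagram $b$ as $\pm(\text{word in } E_n) + (\text{strictly coarser basis diagrams})$, with base case $e_{1,n}^{n}$, and induct on coarseness. The gap is exactly where you place it, and it is not a small one: the existence, for every $b \in B_n$, of a word $w$ whose all-deletion term is $b$ is the entire content of the theorem, and the ``gather--fuse--spread'' description is not a construction --- it is a restatement of the goal. Worse, the supporting claim that ``every other term involves at least one contraction and therefore merges two blocks into one'' is not automatic. It requires that every interface edge of $w$ join two inner vertices lying in \emph{distinct} connected components of the all-deletion graph. If some interface edge joins two vertices that remain connected even after all other interface edges are deleted (a cycle in the stacked diagram, which multi-layer gather/spread words readily create), then contracting that edge yields a term with the \emph{same} boundary partition as $b$; its coefficient is a nonzero element of $\mathbb{C}((Q))$, so the total coefficient of $b$ becomes $\pm 1$ plus other nonzero contributions and can a priori cancel, destroying the ``solve for $b$'' step. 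So your word $w$ must be shown to satisfy two properties --- (a) its all-deletion term is exactly $b$, and (b) no interface edge is internal to a component of the all-deletion graph --- and neither is established.

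For contrast, the paper never constructs a global word: it proceeds by single-generator reductions. Lemma~\ref{lem:isolated-component-contraction} multiplies one generator $e_{i,j}^{n}$ onto the diagram $G'$ obtained by contracting an isolated component into a neighboring one; this creates exactly one multi-edge, so the two-term identity of Lemma~\ref{lem:contraction-deletion-linear-combination} immediately gives $G$ as a combination of $e_{i,j}^{n} \times G'$ and $G'$, with no bookkeeping over many interfaces. Lemmas~\ref{lem:isolated-component-removal} and~\ref{lem:only-crossing-reduction} then organize these one-step moves into inductions (on the number of isolated components, then on the number of crossing components and on the order $n$). If you want to complete your argument, the most direct repair is to replace the one-shot word $w$ by such a sequence of single-generator steps, each invoking the contraction-deletion identity for a single multi-edge between two specified vertices --- at which point you will have essentially reconstructed the paper's proof.
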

To prove this result, we make several conventions.

% Let $A$ and $B$ be diagrams in $\mathcal{C}_n$. If $A$ can be written as a linear combination of products of elements of $E_n\cup\{B\}$, it is said that $A$ can be \textit{reduced to} $B$.

\begin{definition}
    Let $G$ be a diagram in $B_n$. Each component in $G$ with at least 3 boundary points contains exactly one (inner) vertex; let it be known as the \emph{vertex of the component}. For components with 2 boundary points (strands), add a 2-valent vertex at an arbitrary point on the strand and define this point to be the vertex of the component. (This is allowed because the chromatic relation implies that the class of $G$ in $\mathcal{C}_n$ is the same as the class of $G$ with a $2$-valent vertex added on the interior of some arbitrary edge.)
\end{definition}

\begin{definition}
    Let $G$ be a diagram in $\mathcal{C}_n$. A connected component in $G$ is called a \emph{top-isolated component} if it only contains top boundary points, a \emph{bottom-isolated component} if it only contains bottom boundary points, and a \emph{crossing component} if it contains both top and bottom boundary points.
\end{definition}

\begin{definition}
    Let $A$ and $B$ be two top-isolated or two bottom-isolated components. We say $A$ \emph{covers} $B$ if $A$, together with the top or bottom edge of the rectangle, circumscribes $B$.
\end{definition}

\begin{figure}[H]
    \begin{center}
        \begin{tikzpicture}
            \draw[dashed] (0,0) rectangle (6,2);

            \draw[red] (1,2) .. controls ++(0,-0.5) and ++(-0.5,0) .. (2,1.3);
            \draw[red] (2,2) -- (2,1.3);
            \draw[red] (3,2) .. controls ++(0,-0.5) and ++(0.5,0) .. (2,1.3);
            \node[red,draw,circle,fill,scale=0.2] (A) at (2,1.3) {$ $};

            \draw[green] (1,0) .. controls ++(0,1) and ++(0,1) .. (4,0);
            \draw[green,line width=5pt] (2,0) .. controls ++(0,0.5) and ++(0,0.5) .. (3,0);
            
            \draw[blue] (5,0) -- (5,2);
            \draw[blue] (4,2) .. controls ++(0,-0.5) and ++(-0.5,0) .. (5,1);
            \node[blue,draw,circle,fill,scale=0.2] (B) at (5,1) {$ $};

        \end{tikzpicture}
    \end{center}
    \caption{A chromatic diagram with top-isolated, bottom-isolated, and crossing components drawn in red, green, and blue, respectively. A covered bottom-isolated component is shown in bold.}
    \label{fig:component-types}
\end{figure}
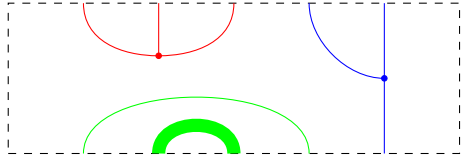

\begin{lemma}
    \label{lem:contraction-deletion-linear-combination}
    Let $G$ be a chromatic diagram, and let $j$ and $k$ be inner vertices in $G$ such that there is a multi-edge between vertices $j$ and $k$. Then, $G$ can be expressed as a linear combination of the diagram $G'$ obtained by contracting the multi-edge, and the diagram $G''$ obtained by deleting the multi-edge.
\end{lemma}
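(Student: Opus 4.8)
The plan is to peel off the parallel edges of the multi-edge one at a time, using relation (1) to separate contraction from deletion and relation (2) to absorb the loops that contraction creates. Write the multi-edge as a bundle of $m \geq 2$ parallel edges $e_1, \dots, e_m$ between the distinct inner vertices $j$ and $k$ (distinct, since a multi-edge is not a collection of loops). For $0 \le \ell \le m$, let $G_\ell$ denote the chromatic diagram obtained from $G$ by retaining exactly $\ell$ of these parallel edges and leaving everything else unchanged, so that $G = G_m$ and $G_0 = G''$ is the deletion of the whole multi-edge. I would also record the key observation that the contraction $G'$ of the multi-edge can be formed from any $G_\ell$ with $\ell \ge 1$ by contracting a single one of its parallel edges (merging $j$ and $k$) and then deleting the leftover loops; in particular, $G'$ does not depend on which or how many edges we start with.

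The central step is to derive a recursion for $G_\ell$. First I would apply relation (1) to a single edge $e$ of $G_\ell$ (valid since $e$ is an inner edge and, because $j \neq k$, not a loop), giving $G_\ell = G_\ell / e - G_\ell \setminus e$. Deleting $e$ clearly yields $G_\ell \setminus e = G_{\ell-1}$. Contracting $e$ merges $j$ and $k$ into a single vertex, still an inner vertex, and turns the remaining $\ell - 1$ parallel edges into loops based at that vertex; since these are inner loops, relation (2) applies $\ell - 1$ times to give $G_\ell / e = (Q-1)^{\ell-1} G'$. Combining these gives the recursion
\begin{equation*}
    G_\ell = (Q-1)^{\ell-1} G' - G_{\ell-1}, \qquad 1 \le \ell \le m,
\end{equation*}
with base value $G_0 = G''$.

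Finally I would solve this linear recursion---either by a direct induction on $\ell$ or by combining the particular solution $\tfrac{1}{Q}(Q-1)^\ell G'$ with the homogeneous solution $(-1)^\ell C$---to conclude
\begin{equation*}
    G = G_m = \frac{(Q-1)^m - (-1)^m}{Q}\, G' + (-1)^m\, G'',
\end{equation*}
which exhibits $G$ as a $\mathbb{C}((Q))$-linear combination of $G'$ and $G''$ (the numerator vanishes at $Q = 0$, so the coefficient of $G'$ is in fact a polynomial in $Q$). The main point requiring care is the contraction step: I must verify that contracting one edge of the bundle and then eliminating the newly created loops via relation (2) really reproduces the contraction $G'$ of the entire multi-edge, and that the merged vertex is indeed inner so that relation (2) is legitimately applicable. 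Everything else is bookkeeping and the routine solution of a first-order linear recurrence.
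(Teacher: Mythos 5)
Your proposal is correct and follows the same route as the paper, which simply asserts that the lemma ``follows directly from repeated application of the chromatic relations (1) and (2) on each of the edges in the multi-edge.'' Your write-up carries out that repeated application explicitly---deriving the recursion $G_\ell = (Q-1)^{\ell-1}G' - G_{\ell-1}$ and the closed form $G = \tfrac{(Q-1)^m-(-1)^m}{Q}\,G' + (-1)^m G''$---so it is a fully detailed version of the paper's one-line argument, with the added care about loops and the merged vertex being inner that the paper leaves implicit.
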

\begin{proof}
    This follows directly from repeated application of the chromatic relations (1) and (2) on each of the edges in the multi-edge.
\end{proof}

\begin{lemma}
    \label{lem:isolated-component-contraction}
Let $G$ be a diagram in $B_n$ with a top-isolated or bottom-isolated component which does not cover any other component. Then, $G$ can be expressed in terms of elements of $E_n$ and the diagram obtained from $G$ by contracting the vertex of this isolated component to the vertex of another component.
\end{lemma}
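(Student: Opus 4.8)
The plan is to reduce to the case of a top-isolated component (the bottom-isolated case is identical after the top--bottom reflection of the construction), and then to realize $G$ as an explicit $\mathbb{C}((Q))$-combination of a single product $e_{p,q}^n \cdot G_1$ and the diagram $G_1$, where $e_{p,q}^n$ is the generator aligned with the isolated component and $G_1$ is the stated contraction.

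First I would pin down the geometry of the isolated component $C$. Writing its boundary points as top positions from the left and using that $C$ covers no other component together with planarity, I would show that the top points of $C$ are consecutive, say positions $p, p+1, \dots, q$: if some top point strictly between them belonged to another component $D$, then $D$ would be trapped in the region bounded by $C$ and the top edge, forcing $C$ to cover $D$ (if $D$ is top-isolated) or forcing a strand of $D$ to cross an edge of $C$ (if $D$ is crossing), a contradiction. Since partitions have no singletons, $m := q-p+1 \ge 2$, so $e_{p,q}^n \in E_n$ is a genuine generator. I would also fix the ``other component'': letting $F$ be the face lying immediately below the cap $C$, I would take $D \neq C$ to be any component on the boundary of $F$ (one exists, since some boundary point not in $C$ must border $F$). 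Contracting the vertex of $C$ to the vertex of $D$ through $F$ merges them into a single component without introducing crossings or inner edges, so the resulting diagram $G_1$ again lies in $B_n$.

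Next I would compute $e_{p,q}^n \cdot G_1$ directly from the stacking definition. In $G_1$ the top points $p, \dots, q$ all belong to the merged component, so multiplying by $e_{p,q}^n$ routes them into the new central vertex $u$ of the generator and produces a multi-edge of multiplicity $m$ between $u$ and the vertex of the merged component, while every other component of $G_1$ (equivalently of $G$) passes through unchanged. I would then resolve this multi-edge by repeatedly invoking Lemma~\ref{lem:contraction-deletion-linear-combination}: each contraction step merges $u$ into the component and leaves loops, which by relation (2) contribute powers of $(Q-1)$ and reproduce $G_1$, while the terminal single-edge step splits into a contraction (again $G_1$) and a deletion, whose effect is exactly to detach $u$ as the cap $C$ and recover $G$. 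This yields a recurrence in $m$ whose solution has the closed form
\begin{align*}
    e_{p,q}^n \cdot G_1 = c_m(Q)\, G_1 + (-1)^m\, G,
\end{align*}
for an explicit polynomial $c_m(Q)$ (for instance $c_2(Q)=Q-2$); solving for $G$ then expresses it purely in terms of the generator $e_{p,q}^n$ and the single diagram $G_1$, as required.

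The main obstacle, I expect, is not the algebra of the multi-edge resolution (a clean induction on $m$ via Lemma~\ref{lem:contraction-deletion-linear-combination}) but the geometric bookkeeping: verifying that a suitable $D$ adjacent to $C$ always exists, that contracting $C$ into $D$ keeps the diagram planar and inner-edge-free so that $G_1 \in B_n$, and---most importantly---that no diagrams besides $G_1$ and $G$ appear anywhere in the expansion of $e_{p,q}^n \cdot G_1$. Showing that the deletion branch produces precisely $G$ with all remaining components intact, and that every contraction branch produces precisely a scalar multiple of $G_1$, is where the argument must be most careful, since it relies crucially on $C$'s top points being consecutive and on $C$ covering nothing.
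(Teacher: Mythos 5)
Your proposal is correct and follows essentially the same route as the paper's proof: align the generator $e_{p,q}^n$ with the isolated component, stack it on the contracted diagram so that the product differs from $G$ only by a multiplicity-$m$ multi-edge between the generator's vertex and the merged vertex, and then resolve that multi-edge via Lemma~\ref{lem:contraction-deletion-linear-combination} (relations (1) and (2)) to solve for $G$ as a linear combination of the product and the contraction. Your version is somewhat more explicit than the paper's---spelling out the consecutiveness argument, the choice of adjacent component via the face below the cap, and the closed-form coefficients $c_m(Q)$ and sign $(-1)^m$, which are indeed correct---but these are refinements of the identical argument.
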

\begin{proof}
Let $L$ be the top-isolated or bottom-isolated component we consider. Without loss of generality, let $L$ be top-isolated.

Because $L$ does not cover any other components, its boundary points must all be consecutive. Let $i$ and $j$ be the positions, from the left, of its leftmost and rightmost boundary points.

Now, let $M$ be a component other than $L$ which is adjacent to $L$ through a face. We know that another component must exist because $L$ does not account for all of the boundary points in the diagram.

Let $G'$ be the diagram identical to $G$ except that the vertices of $L$ and $M$ have been contracted. Now, we concatenate the element $e_{i,j}^{n} \in E_n$ to the top of $G'$ to obtain a diagram $G_1$. Notice that the top half of $e_{i,j}^{n}$ is identical to $L$, so $G_1$ is identical to $G$ except that the vertices of $L$ and $M$ are connected by $j-i+1$ internal edges. By Lemma~\ref{lem:contraction-deletion-linear-combination}, therefore, $G_1$ is a linear combination of $G'$ and $G$. Therefore, $G$ can be expressed as a linear combination of $G'$ and $G_1 = e_{i,j}^{n} \times G'$, so it can be expressed in terms of elements of $E_n$ and $G'$.
\end{proof}

\begin{figure}[H]
    \begin{align*}
        G &= 
        \tikzinline{
            \begin{tikzpicture}[scale=0.6]
                \draw[dashed] (0,0) rectangle (6,2);
                \draw (1,2) .. controls ++(0,-0.5) and ++(-0.5,0) .. (2,1.5);
                \draw (2,2) -- (2,1.5);
                \draw (3,2) .. controls ++(0,-0.5) and ++(0.5,0) .. (2,1.5);
                \node[draw,circle,fill,scale=0.2] (A) at (2,1.5) {$ $};
                \draw (1,0) .. controls ++(0,1) and ++(0,1) .. (4,0);
                \draw (2,0) .. controls ++(0,0.5) and ++(0,0.5) .. (3,0);
                \draw (5,0) -- (5,2);
                \draw (4,2) .. controls ++(0,-0.5) and ++(-0.5,0) .. (5,1);
                \node[draw,circle,fill,scale=0.2] (B) at (5,1) {$ $};
            \end{tikzpicture}
        }, \hspace{4px} G' = \tikzinline{
            \begin{tikzpicture}[scale=0.6]
                \draw[dashed] (0,0) rectangle (6,2);
                \draw (1,2) .. controls ++(0,-0.5) and ++(-0.5,0) .. (5,1);
                \draw (2,2) .. controls ++(0,-0.5) and ++(-0.5,0) .. (5,1);
                \draw (3,2) .. controls ++(0,-0.5) and ++(-0.5,0) .. (5,1);
                \draw (1,0) .. controls ++(0,1) and ++(0,1) .. (4,0);
                \draw (2,0) .. controls ++(0,0.5) and ++(0,0.5) .. (3,0);
                \draw (5,0) -- (5,2);
                \draw (4,2) .. controls ++(0,-0.5) and ++(-0.5,0) .. (5,1);
                \node[draw,circle,fill,scale=0.2] (B) at (5,1) {$ $};
            \end{tikzpicture}
        }
        \\
        G_1 &= 
        \tikzinline{
            \begin{tikzpicture}[scale=0.6]
                \draw[dashed] (0,0) rectangle (6,2);
                \draw (1,2) .. controls ++(0,-0.5) and ++(-0.5,0) .. (2,1.5);
                \draw (2,2) -- (2,1.5);
                \draw (3,2) .. controls ++(0,-0.5) and ++(0.5,0) .. (2,1.5);
                \node[draw,circle,fill,scale=0.2] (A) at (2,1.5) {$ $};
                \draw (1,0) .. controls ++(0,1) and ++(0,1) .. (4,0);
                \draw (2,0) .. controls ++(0,0.5) and ++(0,0.5) .. (3,0);
                \draw (5,0) -- (5,2);
                \draw (4,2) .. controls ++(0,-0.5) and ++(0,0.5) .. (5,1);
                \draw (2,1.5) -- (5,1);
                \draw (2,1.5) .. controls ++(0,-0.5) and ++(-1,0) .. (5,1);
                \draw (2,1.5) .. controls ++(1,0) and ++(0,0.5) .. (5,1);
                \node[draw,circle,fill,scale=0.2] (B) at (5,1) {$ $};
            \end{tikzpicture}
        } \longrightarrow 
        \left\{  
        \tikzinline{
            \begin{tikzpicture}[scale=0.6]
                \draw[dashed] (0,0) rectangle (6,2);
                \draw (1,2) .. controls ++(0,-0.5) and ++(-0.5,0) .. (2,1.5);
                \draw (2,2) -- (2,1.5);
                \draw (3,2) .. controls ++(0,-0.5) and ++(0.5,0) .. (2,1.5);
                \node[draw,circle,fill,scale=0.2] (A) at (2,1.5) {$ $};
                \draw (1,0) .. controls ++(0,1) and ++(0,1) .. (4,0);
                \draw (2,0) .. controls ++(0,0.5) and ++(0,0.5) .. (3,0);
                \draw (5,0) -- (5,2);
                \draw (4,2) .. controls ++(0,-0.5) and ++(-0.5,0) .. (5,1);
                \node[draw,circle,fill,scale=0.2] (B) at (5,1) {$ $};
            \end{tikzpicture}
        },
        \tikzinline{
            \begin{tikzpicture}[scale=0.6]
                \draw[dashed] (0,0) rectangle (6,2);
                \draw (1,2) .. controls ++(0,-0.5) and ++(-0.5,0) .. (5,1);
                \draw (2,2) .. controls ++(0,-0.5) and ++(-0.5,0) .. (5,1);
                \draw (3,2) .. controls ++(0,-0.5) and ++(-0.5,0) .. (5,1);
                \draw (1,0) .. controls ++(0,1) and ++(0,1) .. (4,0);
                \draw (2,0) .. controls ++(0,0.5) and ++(0,0.5) .. (3,0);
                \draw (5,0) -- (5,2);
                \draw (4,2) .. controls ++(0,-0.5) and ++(-0.5,0) .. (5,1);
                \node[draw,circle,fill,scale=0.2] (B) at (5,1) {$ $};
            \end{tikzpicture}
        }
        \right\}
        \\
        G &\longrightarrow \{G' , G_1\}
        \\ G_1 &=
        \tikzinline{
            \begin{tikzpicture}[scale=0.6]
                \draw[dashed] (0,0) rectangle (6,2);
                \draw (1,2) .. controls ++(0,-0.5) and ++(-0.5,0) .. (2,1.5);
                \draw (2,2) -- (2,1.5);
                \draw (3,2) .. controls ++(0,-0.5) and ++(0.5,0) .. (2,1.5);
                \node[draw,circle,fill,scale=0.2] (A) at (2,1.5) {$ $};
                \draw (1,0) .. controls ++(0,1) and ++(0,1) .. (4,0);
                \draw (2,0) .. controls ++(0,0.5) and ++(0,0.5) .. (3,0);
                \draw (5,0) -- (5,2);
                \draw (4,2) .. controls ++(0,-0.5) and ++(0,0.5) .. (5,1);
                \draw (2,1.5) -- (5,1);
                \draw (2,1.5) .. controls ++(0,-0.5) and ++(-1,0) .. (5,1);
                \draw (2,1.5) .. controls ++(1,0) and ++(0,0.5) .. (5,1);
                \draw[dashed] (0,1) .. controls ++(5,0) and ++(-3,0) .. (6,1.6);
                \node[draw,circle,fill,scale=0.2] (B) at (5,1) {$ $};
            \end{tikzpicture}
        } =
        \tikzinline{
            \begin{tikzpicture}[scale=0.6]
                \draw[dashed] (0,0) rectangle (6,2);
                \draw (1,0) .. controls ++(0,1) and ++(0,-1) .. (3,2);
                \draw (3,0) .. controls ++(0,1) and ++(0,-1) .. (1,2);
                \draw (2,0) -- (2,2);
                \node[draw,circle,fill,scale=0.2] (A) at (2,1) {$ $};
                \draw (4,0) -- (4,2);
                \draw (5,0) -- (5,2);
            \end{tikzpicture}
        }
        \times
        \tikzinline{
            \begin{tikzpicture}[scale=0.6]
                \draw[dashed] (0,0) rectangle (6,2);
                \draw (1,2) .. controls ++(0,-0.5) and ++(-0.5,0) .. (5,1);
                \draw (2,2) .. controls ++(0,-0.5) and ++(-0.5,0) .. (5,1);
                \draw (3,2) .. controls ++(0,-0.5) and ++(-0.5,0) .. (5,1);
                \draw (1,0) .. controls ++(0,1) and ++(0,1) .. (4,0);
                \draw (2,0) .. controls ++(0,0.5) and ++(0,0.5) .. (3,0);
                \draw (5,0) -- (5,2);
                \draw (4,2) .. controls ++(0,-0.5) and ++(-0.5,0) .. (5,1);
                \node[draw,circle,fill,scale=0.2] (B) at (5,1) {$ $};
            \end{tikzpicture}
        } \\
        G &\longrightarrow \{G', e_{1,3}^5 \times G' \}
    \end{align*}
    \caption{The contraction of an isolated component, as performed in Lemma~\ref{lem:isolated-component-contraction}. A right arrow into a set of chromatic diagrams denotes that the chromatic diagram to the left of the arrow can be written as a linear combination of elements in the set.}
    \label{fig:isolated-component-contraction}
\end{figure}
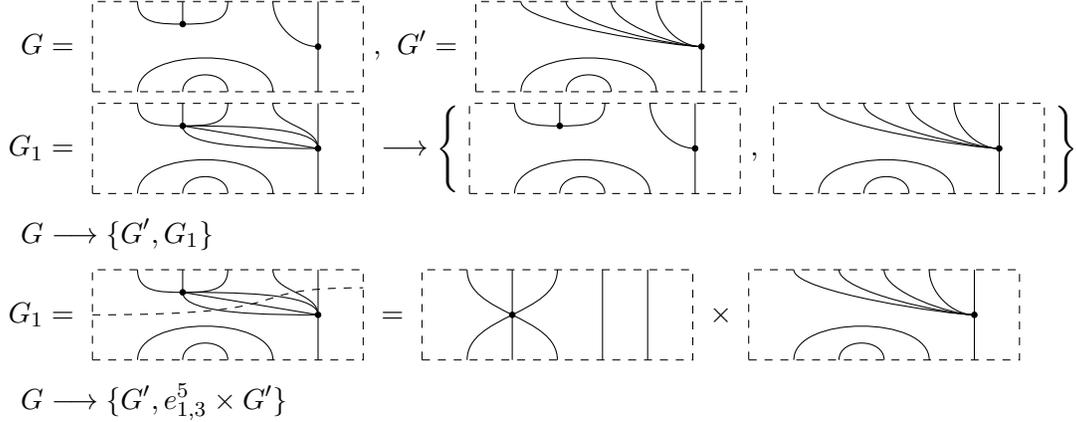

\begin{lemma}
\label{lem:isolated-component-removal}
Let $G$ be a diagram in $B_n$. Then, the diagram $G$ can be expressed in terms of elements of $E_n$ and a diagram where all components are crossing.
\end{lemma}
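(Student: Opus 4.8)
The plan is to induct on the number of isolated (that is, top-isolated or bottom-isolated) components of $G$, repeatedly applying Lemma~\ref{lem:isolated-component-contraction} to strip off one isolated component at a time until only crossing components remain. For the base case, if $G$ has no isolated components then every component of $G$ is crossing, so $G$ itself is the required all-crossing diagram and the trivial expression suffices.

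For the inductive step, suppose $G$ has at least one isolated component. I first claim there is a top-isolated or bottom-isolated component $L$ that covers no other component. By planarity, the edges of a top-isolated component together with the top edge of the rectangle bound a region, and for two top-isolated components these regions are either disjoint or nested; hence covering restricts to a strict partial order (irreflexive, antisymmetric, transitive) on the top-isolated components, and likewise on the bottom-isolated components. A finite nonempty poset has a minimal element, so I take $L$ to be such a minimal isolated component. Since the covering relation is only defined between two components of the same side, a minimal top-isolated component covers no top-isolated component and nothing else, so it covers no component at all (symmetrically for the bottom-isolated case).

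Having located $L$, I apply Lemma~\ref{lem:isolated-component-contraction}, which supplies an index pair $(i,j)$ and an adjacent component $M$ so that $G = c_1 G' + c_2\, e_{i,j}^{n}\times G'$, where $G'$ is obtained from $G$ by contracting the vertex of $L$ into the vertex of $M$. I would then check two facts about $G'$. First, $G'$ still lies in $B_n$: contracting merely identifies two vertices whose incident edges all terminate at boundary points, so no inner edge (and no loop) is created. Second, $G'$ has strictly fewer isolated components than $G$: the contraction merges two components into one, so the total number of components drops by exactly one while the number of crossing components cannot decrease (a crossing $M$ stays crossing after the merge); consequently the isolated count falls by one when $M$ is crossing or isolated on the same side as $L$, and by two when $M$ is isolated on the opposite side, so that the merged component becomes crossing.

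By the inductive hypothesis, $G'$ is expressible in terms of elements of $E_n$ and a single all-crossing diagram $D$, say $G' = \sum_k c_k\, w_k\, D$ with each $w_k$ a word in $E_n$. Substituting into $G = c_1 G' + c_2\, e_{i,j}^{n}\times G'$ exhibits $G$ once more as a linear combination of $E_n$-words applied to the same $D$, closing the induction. I expect the main obstacle to be the bookkeeping of the inductive step rather than the algebra: specifically, justifying via planarity that a covering-minimal isolated component always exists, and verifying in every case for the type of $M$ that the contraction strictly decreases the isolated-component count. Once these are secured, the recombination of the linear combinations is routine.
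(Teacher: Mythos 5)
Your proof is correct and takes essentially the same route as the paper's: induction on the combined number of top- and bottom-isolated components, stripping one off at each step via Lemma~\ref{lem:isolated-component-contraction}, with the identical case analysis on the type of the adjacent component $M$ (a same-side or crossing $M$ drops the count by one, an opposite-side $M$ by two). The only difference is that you make explicit two points the paper leaves implicit --- the existence of a covering-minimal isolated component and the fact that the contracted diagram $G'$ remains in $B_n$ --- which strengthens rather than changes the argument.
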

\begin{proof}
Note that $G$ may have top-isolated, bottom-isolated, and crossing components. Now, we prove by induction on the combined count of top-isolated and bottom-isolated components. The base case for this induction occurs when all the components of $G$ are crossing, and the combined count is zero.

For the inductive step, notice that we can always express $G$ in terms of a diagram with a lower combined count as follows:

Let $L$ be a top-isolated or bottom-isolated component which does not cover any other component. Without loss of generality, suppose $L$ is top-isolated. 

%Now, if $L$ covers any top-isolated component, we let $L$ be the covered component instead. We repeat until we obtain an $L$ which does not cover any other components. This allows us to always find a top-isolated component that does not cover other components in a diagram, given that there are top-isolated components. 

By Lemma~\ref{lem:isolated-component-contraction}, we may express $G$ in terms of $E_n$, and the diagram $G'$ obtained by contracting the vertex of $L$ and the vertex of an adjacent (through a face) component $M$. If $M$ is top-isolated or crossing, the number of top-isolated components decreases by one. If $M$ is bottom-isolated, the number of top-isolated components and the number of bottom-isolated components both decrease by one, reducing the total count by two. Therefore, $G'$ has a strictly lower combined count than $G$.

Therefore, the induction is complete.
\end{proof}

\begin{lemma}
\label{lem:only-crossing-reduction}
Let $G$ be a diagram in $B_n$ with only crossing components. Then, $G$ can be expressed in terms of elements of $E_n$ and diagrams in $B_n$ formed by taking some element of $B_i$ and $n-i$ vertical strands for $i<n$.
\end{lemma}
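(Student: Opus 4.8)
The plan is to induct, peeling a single vertical strand off one end of the diagram. First I would record the structural consequence of the hypothesis: since every component is crossing and the diagram is planar, each crossing component forms a barrier from the top edge to the bottom edge, so the components admit a left-to-right linear order $C_1,\dots,C_m$, and each $C_k$ joins a block of consecutive top boundary points to a block of consecutive bottom boundary points. In particular the rightmost component $C_m$ occupies the rightmost top columns $[q,n]$ and the rightmost bottom columns $[p,n]$. If $C_m$ is a single vertical strand (that is, $p=q=n$), then deleting its column exhibits $G$ as an element of $B_{n-1}$ juxtaposed on the left with one vertical strand, which is exactly a diagram of the claimed form with $i=n-1$; so this case is immediate.

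The substance is the case where $C_m$ has at least three boundary points and hence a genuine inner vertex. Here I would reduce by manufacturing an inner edge and splitting it with chromatic relation (1). The model computation is for overlapping adjacent generators: stacking $e_{a,b}^{n}$ above $e_{b,c}^{n}$ fuses their two inner vertices into a single component joined by one inner edge, and applying relation (1) to that edge gives $e_{a,b}^{n}\cdot e_{b,c}^{n}=e_{a,c}^{n}-D$, where $D$ is the two-component diagram obtained by deleting the fused edge; both blocks of $D$ are \emph{shifted}, meeting one more column on one side than the other. Read in reverse, this says a shifted block equals a generator minus a product of generators. More generally, I would consider a product $e_{k,k+1}^{n}\cdot G^{\ast}$ for a suitable adjacent generator and a diagram $G^{\ast}$ whose rightmost component is smaller, and apply Lemma~\ref{lem:contraction-deletion-linear-combination} together with relation (1) to the resulting inner or multi-edge: the contraction term merges the extreme block (returning $G$ itself or a padded diagram), while the deletion term strictly simplifies the extreme component. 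Solving the resulting linear relation for $G$ expresses it through elements of $E_n$, a padded diagram of the form $B_i\otimes(\text{strands})$, and a correction diagram that is closer to having a peelable strand at the right end.

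I would then set up the induction on a complexity measure that this move strictly decreases --- for instance the number of boundary points lying in components with more than two boundary points, or $n$ minus the number of columns that are already vertical strands flush against an end --- and iterate until the extreme component becomes a single strand, at which point the base case applies. The main obstacle is the shift case, where $C_m$ meets a different number of top and bottom columns, so that no single generator $e_{i,j}^{n}$ matches it and one contraction--deletion does not straighten it; this is precisely the configuration produced and resolved by the overlapping-generator identity above, and the delicate point is to choose the generators and the complexity measure so that the correction term is provably simpler in every sub-case. A secondary subtlety is that a deletion term can momentarily produce a top- or bottom-isolated component; I would route any such diagram back through Lemma~\ref{lem:isolated-component-removal}, so the measure must dominate that reduction as well, or equivalently the two reductions must be interleaved so that the joint process terminates.
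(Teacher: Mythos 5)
Your structural observations are correct (crossing components of a planar diagram are linearly ordered, each meeting consecutive blocks of top and bottom points), and your model identity $e_{a,b}^{n}\cdot e_{b,c}^{n}=e_{a,c}^{n}-D$ is a valid instance of relation (1). But the proposal has a genuine gap exactly where you flag it: the inductive step is never constructed. The ``suitable adjacent generator,'' the diagram $G^{\ast}$ ``whose rightmost component is smaller,'' and the complexity measure are all left unspecified, and you concede that checking that every correction term is simpler ``in every sub-case'' remains to be done. This is not a routine detail. As you yourself note, deletion terms in your scheme can create top- or bottom-isolated components (for instance, stacking $e_{k,k+1}^{n}$ over a diagram whose top points $k$ and $k+1$ lie in a single component and then deleting the resulting multi-edge leaves the generator's vertex top-isolated), so your induction would have to be interleaved with Lemma~\ref{lem:isolated-component-removal}, and the termination of that joint process is precisely what needs proof and is not supplied. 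A sketch whose two acknowledged obstacles coincide with the two things that must be proved is not yet a proof.

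The gap dissolves if the auxiliary diagrams are chosen as in the paper's argument, which your plan mirrors but does not pin down. Induct simply on the number of components. Let $L$ be the leftmost component (your rightmost works symmetrically), with $a$ top and $b$ bottom points, and let $M$ be its neighbor. If $a=b$, then $G=e_{1,a}^{n}\times G_2$, where $G_2$ consists of $a$ vertical strands next to an element of $B_{n-a}$, already of the target form. If $a>b$, let $G_1$ be $G$ with $a-b$ extra edges joining the vertices of $L$ and $M$, and let $G'$ be the diagram with those two vertices contracted. The two key points are: (i) $G_1=e_{1,a}^{n}\times G_2$, where $G_2$ (obtained by replacing $L$ with $b$ strands and rerouting the $a-b$ leftover top points to $M$) is \emph{already} of the target form, so $G_1$ needs no further recursion; and (ii) by Lemma~\ref{lem:contraction-deletion-linear-combination}, $G$ is a linear combination of $G_1$ and $G'$, and $G'$ --- the only term fed back into the induction --- has one fewer component and still only crossing components, so isolated components never arise and no auxiliary measure is needed. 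Without observation (i), that the added-edge diagram factors through a single generator into an already-finished diagram, one is forced into exactly the open-ended bookkeeping on which your proposal stalls.
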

\begin{proof}
We induct on the number of connected components in the diagram with only crossing components.

The base case is that $G$ is the diagram in $B_n$ with only one connected component. This diagram is identical to $e_{1,n}^{n}$, so it can be directly generated by $E_n$.

Now, we consider the inductive step. Suppose that there $G$ has at least two connected components. Note that there is always a well-defined leftmost component, since all components are crossing. Denote this $L$, and let $a$ and $b$ be its number of top and bottom boundary points, respectively.

We first consider the case where $a=b$. Then, $L$ can be obtained from the element $e_{1,a}^n \in E_n$, and the rightmost section of $G$ excluding $L$ is identical to a diagram with only crossing components in $B_{n-a}$. In other words, $G$ can be written as the product of $e_{1,a}^n \in E_n$ and a diagram formed by $a$ vertical strands on the left, then a diagram in $B_{n-a}$ on the right.

\begin{figure}[H]
    \begin{align*}
        G = \tikzinline{
            \begin{tikzpicture}[scale=0.6]
                \draw[dashed] (0,0) rectangle (6,2);
                \draw (1,0) .. controls ++(0,1) and ++(0,-1) .. (3,2);
                \draw (3,0) .. controls ++(0,1) and ++(0,-1) .. (1,2);
                \draw (2,0) -- (2,2);
                \node[draw,circle,fill,scale=0.2] (A) at (2,1) {$ $};
                \draw (4,0) .. controls ++(0,1) and ++(0,-1) .. (5,2);
                \draw (5,0) .. controls ++(0,1) and ++(0,-1) .. (4,2);
                \node[draw,circle,fill,scale=0.2] (B) at (4.5,1) {$ $};
            \end{tikzpicture}
        } \longrightarrow
        \left\{
        \tikzinline{
            \begin{tikzpicture}[scale=0.6]
                \draw[dashed] (0,0) rectangle (4,2);
                \draw (1,0) .. controls ++(0,1) and ++(0,-1) .. (3,2);
                \draw (3,0) .. controls ++(0,1) and ++(0,-1) .. (1,2);
                \draw (2,0) -- (2,2);
                \node[draw,circle,fill,scale=0.2] (A) at (2,1) {$ $};
            \end{tikzpicture}
        },
        \tikzinline{
            \begin{tikzpicture}[scale=0.6]
                \draw[dashed] (0,0) rectangle (3,2);
                \draw (1,0) .. controls ++(0,1) and ++(0,-1) .. (2,2);
                \draw (2,0) .. controls ++(0,1) and ++(0,-1) .. (1,2);
                \node[draw,circle,fill,scale=0.2] (B) at (1.5,1) {$ $};
            \end{tikzpicture}
        }
        \right\}
    \end{align*}
\caption{The $a=b$ subcase in the inductive step of the proof of Lemma~\ref{lem:only-crossing-reduction}.}
\label{fig:only-crossing-reduction-abequal}
\end{figure}
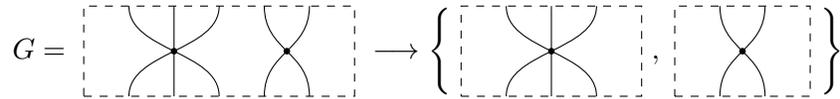

Now, without loss of generality, consider $a>b$. Denote the second leftmost component of $G$ as $M$, and let $G'$ be the diagram obtained from $G$ by contracting the vertices of $L$ and $M$. Let $G_1$ be the diagram obtained from $G$ by drawing $a-b$ edges between the vertices of $L$ and $M$. Then, by Lemma~\ref{lem:contraction-deletion-linear-combination}, $G_1$ can be expressed as a linear combination of $G$ and $G'$.

Let $G_2$ be the diagram obtained from $G$ by replacing $L$ with $b$ vertical strands, and connecting the leftover $a-b$ top boundary points to $M$. Then, $G_1$ can be obtained by concatenating the element $e_{1,a}^n \in E_n$ to the top of $G_2$. Therefore, $G_1$ can be expressed in terms of an element of $E_n$ and a diagram formed from vertical strands and $B_{n-b}$.

Therefore, $G$ can be expressed in terms of $G'$ and $G_1$, so it can be expressed in terms of an element of $E_n$, a diagram formed from vertical strands and $B_{n-b}$, and another diagram $G'$ with one less connected component than $G$. Thus, the induction is complete.
\end{proof}

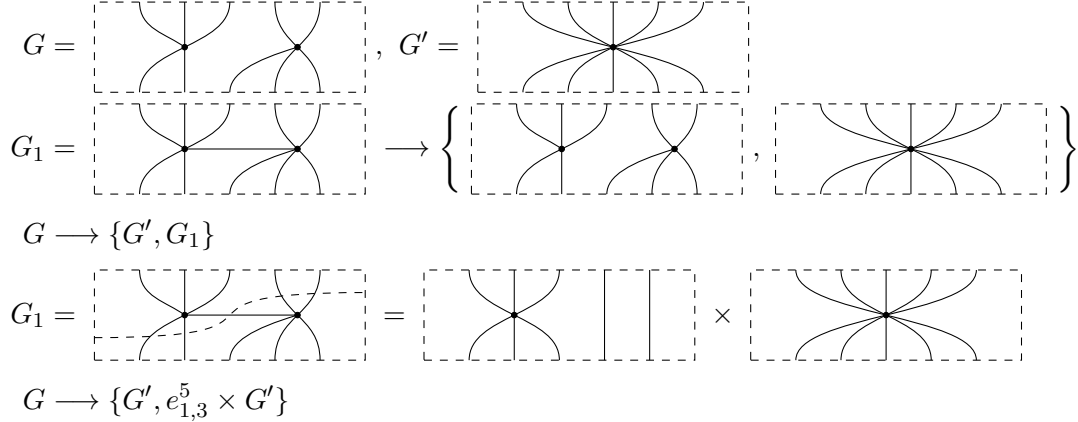
\begin{figure}[H]
    \begin{align*}
        G &= \tikzinline{
            \begin{tikzpicture}[scale=0.6]
                \draw[dashed] (0,0) rectangle (6,2);
                \draw (1,0) .. controls ++(0,1) and ++(0,-1) .. (3,2);
                \draw (1,2) .. controls ++(0,-0.6) and ++(-0.4,0.3) .. (2,1);
                \draw (2,0) -- (2,2);
                \node[draw,circle,fill,scale=0.2] (A) at (2,1) {$ $};
                \draw (3,0) .. controls ++(0,0.4) and ++(-1,-0.3) .. (4.5,1);
                \draw (4,0) .. controls ++(0,1) and ++(0,-1) .. (5,2);
                \draw (5,0) .. controls ++(0,1) and ++(0,-1) .. (4,2);
                \node[draw,circle,fill,scale=0.2] (B) at (4.5,1) {$ $};
            \end{tikzpicture}
        }, \hspace{4px} G' = \tikzinline{
            \begin{tikzpicture}[scale=0.6]
                \draw[dashed] (0,0) rectangle (6,2);
                \draw (1,0) .. controls ++(0,1) and ++(0,-1) .. (5,2);
                \draw (2,0) .. controls ++(0,1) and ++(0,-1) .. (4,2);
                \draw (3,0) -- (3,2);
                \draw (4,0) .. controls ++(0,1) and ++(0,-1) .. (2,2);
                \draw (5,0) .. controls ++(0,1) and ++(0,-1) .. (1,2);
                \node[draw,circle,fill,scale=0.2] (B) at (3,1) {$ $};
            \end{tikzpicture} 
        }
        \\
        G_1 &= \tikzinline{
            \begin{tikzpicture}[scale=0.6]
                \draw[dashed] (0,0) rectangle (6,2);
                \draw (1,0) .. controls ++(0,1) and ++(0,-1) .. (3,2);
                \draw (1,2) .. controls ++(0,-0.6) and ++(-0.4,0.3) .. (2,1);
                \draw (2,0) -- (2,2);
                \node[draw,circle,fill,scale=0.2] (A) at (2,1) {$ $};
                \draw (3,0) .. controls ++(0,0.4) and ++(-1,-0.3) .. (4.5,1);
                \draw (4,0) .. controls ++(0,1) and ++(0,-1) .. (5,2);
                \draw (5,0) .. controls ++(0,1) and ++(0,-1) .. (4,2);
                \node[draw,circle,fill,scale=0.2] (B) at (4.5,1) {$ $};
                \draw (2,1) -- (4.5,1);
            \end{tikzpicture}
        }
        \longrightarrow \left\{ 
        \tikzinline{
            \begin{tikzpicture}[scale=0.6]
                \draw[dashed] (0,0) rectangle (6,2);
                \draw (1,0) .. controls ++(0,1) and ++(0,-1) .. (3,2);
                \draw (1,2) .. controls ++(0,-0.6) and ++(-0.4,0.3) .. (2,1);
                \draw (2,0) -- (2,2);
                \node[draw,circle,fill,scale=0.2] (A) at (2,1) {$ $};
                \draw (3,0) .. controls ++(0,0.4) and ++(-1,-0.3) .. (4.5,1);
                \draw (4,0) .. controls ++(0,1) and ++(0,-1) .. (5,2);
                \draw (5,0) .. controls ++(0,1) and ++(0,-1) .. (4,2);
                \node[draw,circle,fill,scale=0.2] (B) at (4.5,1) {$ $};
            \end{tikzpicture}
        }, \tikzinline{
            \begin{tikzpicture}[scale=0.6]
                \draw[dashed] (0,0) rectangle (6,2);
                \draw (1,0) .. controls ++(0,1) and ++(0,-1) .. (5,2);
                \draw (2,0) .. controls ++(0,1) and ++(0,-1) .. (4,2);
                \draw (3,0) -- (3,2);
                \draw (4,0) .. controls ++(0,1) and ++(0,-1) .. (2,2);
                \draw (5,0) .. controls ++(0,1) and ++(0,-1) .. (1,2);
                \node[draw,circle,fill,scale=0.2] (B) at (3,1) {$ $};
            \end{tikzpicture} 
        }
        \right\}
        \\
        G &\longrightarrow \{ G', G_1\}
        \\ G_1 &= \tikzinline{
            \begin{tikzpicture}[scale=0.6]
                \draw[dashed] (0,0) rectangle (6,2);
                \draw (1,0) .. controls ++(0,1) and ++(0,-1) .. (3,2);
                \draw (1,2) .. controls ++(0,-0.6) and ++(-0.4,0.3) .. (2,1);
                \draw (2,0) -- (2,2);
                \node[draw,circle,fill,scale=0.2] (A) at (2,1) {$ $};
                \draw (3,0) .. controls ++(0,0.4) and ++(-1,-0.3) .. (4.5,1);
                \draw (4,0) .. controls ++(0,1) and ++(0,-1) .. (5,2);
                \draw (5,0) .. controls ++(0,1) and ++(0,-1) .. (4,2);
                \node[draw,circle,fill,scale=0.2] (B) at (4.5,1) {$ $};
                \draw (2,1) -- (4.5,1);
                \draw[dashed] (0,0.5) .. controls ++(5,0) and ++(-5,0) .. (6,1.5);
            \end{tikzpicture}
        }
        = \tikzinline{
            \begin{tikzpicture}[scale=0.6]
                \draw[dashed] (0,0) rectangle (6,2);
                \draw (1,0) .. controls ++(0,1) and ++(0,-1) .. (3,2);
                \draw (3,0) .. controls ++(0,1) and ++(0,-1) .. (1,2);
                \draw (2,0) -- (2,2);
                \node[draw,circle,fill,scale=0.2] (A) at (2,1) {$ $};
                \draw (4,0) -- (4,2);
                \draw (5,0) -- (5,2);
            \end{tikzpicture}
        }
        \times
        \tikzinline{
            \begin{tikzpicture}[scale=0.6]
                \draw[dashed] (0,0) rectangle (6,2);
                \draw (1,0) .. controls ++(0,1) and ++(0,-1) .. (5,2);
                \draw (2,0) .. controls ++(0,1) and ++(0,-1) .. (4,2);
                \draw (3,0) -- (3,2);
                \draw (4,0) .. controls ++(0,1) and ++(0,-1) .. (2,2);
                \draw (5,0) .. controls ++(0,1) and ++(0,-1) .. (1,2);
                \node[draw,circle,fill,scale=0.2] (B) at (3,1) {$ $};
            \end{tikzpicture} 
        } \\
        G &\longrightarrow \{G', e_{1,3}^5 \times G' \}
    \end{align*}
\caption{The $a>b$ subcase in the inductive step of the proof of Lemma~\ref{lem:only-crossing-reduction}. A right arrow into a set of chromatic diagrams denotes that the chromatic diagram to the left of the arrow can be written as a linear combination of elements in the set.}
\label{fig:only-crossing-reduction-general}
\end{figure}

\begin{proof}[Proof of Theorem~\ref{thm:chromatic-generating-set}]
    Since $B_n$ is an additive basis, every element of $\mathcal{C}_n$ can be expressed as a linear combination of elements of $B_n$. Now, we prove by inducting on $n$ that $E_n$ generates every element of $B_n$.

    The base case is $B_1$, which shares its only element with $E_1$.

    For the inductive step, suppose that the result has been proven for all $B_m$ where $m<n$. Note that by Lemma~\ref{lem:isolated-component-removal} and Lemma~\ref{lem:only-crossing-reduction}, every diagram in $B_n$ can be expressed in terms of elements of $E_n$ and diagrams formed from vertical strands and elements of $B_i$, for $i<n$. All of these such diagrams can be expressed in terms of elements of $E_n$.
\end{proof}

\begin{remark}
    Note that the generating set $E_n$ has size $\binom{n}{2}$. This is very interesting to use, since the basis $B_n$ grows exponentially, while the generating set $E_n$ grows quadratically. The sizes of $E_n$ for small-order chromatic algebras are shown in Figure~\ref{fig:number-of-generators}. 
    %\textcolor{red}{put in table of small values, FIX values by -1}
\end{remark}

\begin{figure}[H]
    \begin{center}
        \begin{tabular}{|c||c|c|c|c|c|c|c|c|c|c|c|}
            \hline
            $n$ & 1 & 2 & 3 & 4 & 5 & 6 & 7 & 8 & 9 & 10 & 11
            \\ \hline\hline
            $|E_n|$ & 0 & 1 & 3 & 6 & 10 & 15 & 21 & 28 & 36 & 45 & 55
            \\ \hline
        \end{tabular}
    \end{center}
    \caption{The sizes of $|E_n|$ for chromatic algebras of small order.}
    \label{fig:number-of-generators}
\end{figure}
Note that we do \textit{not} claim that $E_n$ is minimal as a generating set! This remains to be proven.

\begin{conjecture}
   The set $E_n$ is a minimal generating set in the weak sense: no proper subset of $E_n$ generates $\mathcal{C}_n$.
\end{conjecture}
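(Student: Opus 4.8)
The plan is to establish the conjecture by the standard reduction to single deletions together with an invariant-subspace criterion. First I would observe that it suffices to prove, for every pair $(i,j)$ with $1 \le i < j \le n$, that $e_{i,j}^{n}$ does not lie in the subalgebra $\langle E_n \setminus \{e_{i,j}^{n}\}\rangle$ generated by the remaining generators together with the identity: any proper subset $S \subsetneq E_n$ omits some $e_{i,j}^{n}$ and hence satisfies $S \subseteq E_n \setminus \{e_{i,j}^{n}\}$, so if every single deletion already fails to generate $\mathcal{C}_n$ then no proper subset can. To prove one such non-membership I would use the following criterion: if there is a linear subspace $W \subseteq \mathcal{C}_n$ with $1 \in W$, with $e_{a,b}^{n} \cdot W \subseteq W$ for every $(a,b) \neq (i,j)$, and with $e_{i,j}^{n} \notin W$, then $e_{i,j}^{n} \notin \langle E_n \setminus \{e_{i,j}^{n}\}\rangle$. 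This holds because that subalgebra is spanned by products of the allowed generators, each such product carries $1 \in W$ into $W$, and the identity itself lies in the subalgebra, so the whole subalgebra is contained in $W$. Since $B_n$ is a basis, the natural candidates for $W$ are coordinate subspaces spanned by those basis diagrams whose link pattern satisfies a prescribed combinatorial condition; the entire difficulty is to design the condition so that it is preserved both by left stacking of the allowed generators and by the chromatic reduction to $B_n$, yet excludes the diagram $e_{i,j}^{n}$.

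Second, I would control the left-stacking action through the through-degree, that is, the number of crossing components of a diagram. I expect this quantity to be multiplicative in the sense of not increasing under products, even after reduction to $B_n$: contraction of an inner edge (relation (1)) preserves boundary connectivity, while deletion (relations (1) and (2)) and the $2$-valent vertex relation can only split components, so the span $\mathcal{F}^{(d)}$ of basis diagrams with at most $d$ crossing components should be a two-sided ideal. The generator $e_{i,j}^{n}$ has through-degree $n-(j-i)$, so passing to the quotient $\mathcal{C}_n/\mathcal{F}^{(n-(j-i)-1)}$ annihilates every generator of strictly smaller through-degree, leaving only the generators of width at most $j-i+1$ available to build the image of $e_{i,j}^{n}$. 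This reduces the case of the narrow generators (large through-degree, for instance the adjacent generators $e_{k,k+1}^{n}$) to a linear-independence statement inside a single graded piece, where the action on the associated cell module of noncrossing link patterns can be computed explicitly.

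The hard part will be the wide generators of small through-degree, above all $e_{1,n}^{n}$. For these the through-degree filtration gives no leverage, since every generator has through-degree at least one: already for $n=3$ the identity $e_{1,2}^{n}\,e_{2,3}^{n} = e_{1,3}^{n} - D$ shows that contraction in relation (1) can merge several narrow crossing components into one wide component, so a wide diagram genuinely appears in products of narrow generators. The obstruction is therefore not that $e_{i,j}^{n}$ never occurs as a basis term of a product, but that it can occur only alongside correction diagrams $D$ which must themselves be generated; any proof must show that this dependence can never be made consistent. I would attack this with a complementary invariant that records the joint top and bottom link patterns of a diagram and tracks how they evolve under the chromatic reduction, where deletion refines a partition and contraction coarsens it. The challenge, and the crux of the whole conjecture, is to engineer a single refinement-robust condition that simultaneously survives stacking and makes the exact noncrossing partition realized by $e_{i,j}^{n}$ provably unreachable. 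I expect the resulting combinatorial core to reduce to a rank computation over $\mathbb{C}((Q))$ on the lattice of noncrossing partitions without singletons, which can be verified directly for small $n$, consistent with the tabulated dimensions, and whose uniform resolution is the principal remaining obstacle.
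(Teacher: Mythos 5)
First, note the ground truth here: the paper itself does \emph{not} prove this statement --- it is explicitly left as a conjecture (``Note that we do \textit{not} claim that $E_n$ is minimal\dots This remains to be proven''). So there is no paper proof to match, and the real question is whether your proposal closes the gap the paper left open. It does not. What you have written is a research program, not a proof, and you say so yourself: the construction of the invariant subspace $W$ (equivalently, the ``refinement-robust condition'' on link patterns) is exactly the missing content, and it is never produced --- not for a single generator, not even in the smallest nontrivial case such as showing $e_{1,3}^{3} \notin \langle e_{1,2}^{3}, e_{2,3}^{3}\rangle$ in $\mathcal{C}_3$, where the algebra is only $15$-dimensional and the claim could in principle be settled by an explicit finite computation. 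The parts that are solid are the easy reductions: restricting to single deletions is valid (any proper subset omits some $e_{i,j}^{n}$), and the invariant-subspace criterion is logically correct (if $1 \in W$, $W$ is stable under left multiplication by the retained generators, and $e_{i,j}^{n}\notin W$, then the unital subalgebra they generate lies in $W$). But these are scaffolding; the load-bearing step is absent, and your own identity $e_{1,2}^{n}e_{2,3}^{n} = e_{1,3}^{n} - D$ (which is correct --- it is one application of relation (1) to the single inner edge of the product) shows precisely why any naive ``the target diagram never appears in a product'' invariant fails. Identifying the obstruction is not the same as overcoming it.

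Two further cautions on the technical middle of your plan. First, your justification of the through-degree filtration --- ``deletion can only split components,'' hence the span of diagrams with at most $d$ crossing components is a two-sided ideal --- is too quick: splitting a crossing component can produce \emph{two} crossing components (e.g., an inner edge joining a vertex that reaches top and bottom to another vertex that reaches top and bottom), so refinement of the boundary partition does not by itself bound the crossing count. The ideal property is still true, but it needs the interface argument: every crossing component of any term in the reduction of a product $A \times B$ must contain a crossing component of $A$ and a crossing component of $B$, and distinct ones for distinct components, which is what gives the bound by $\min$ of the through-degrees. Second, even granting the filtration, it only quotients away \emph{wider} generators; it says nothing about whether $e_{i,j}^{n}$ can be produced from generators of the same or smaller width (your ``linear-independence statement inside a single graded piece''), and for the widest generator $e_{1,n}^{n}$ it gives nothing at all, as you concede. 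So the proposal, as it stands, leaves the conjecture exactly as open as the paper does; its value is as a plausible plan of attack, with the central invariant still to be invented.
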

\begin{conjecture}
   The set $E_n$ is a minimal generating set in the strong sense: $E_n$ has the lowest cardinality of any generating set of $\mathcal{C}_n$.
\end{conjecture}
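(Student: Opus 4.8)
The plan is to prove the matching lower bound $\mu(\mathcal{C}_n) \geq \binom{n}{2}$ on the minimal number of algebra generators; the upper bound $\mu(\mathcal{C}_n) \leq \binom{n}{2}$ is immediate from Theorem~\ref{thm:chromatic-generating-set} together with $|E_n| = \binom{n}{2}$. Since $\mathcal{C}_n$ is finite-dimensional, I would invoke the standard radical reduction: writing $J = \mathrm{rad}(\mathcal{C}_n)$ for the Jacobson radical, a subset generates $\mathcal{C}_n$ as a unital algebra if and only if its image generates the quotient $\mathcal{C}_n/J^2$. This is a Nakayama-type argument: if $B$ is the subalgebra generated and $B + J^2 = \mathcal{C}_n$, then $J = (B\cap J) + J^2$, and since $B$ is closed under multiplication one has $(B\cap J)^2 \subseteq B$, which forces $B = \mathcal{C}_n$. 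Hence $\mu(\mathcal{C}_n) = \mu(\mathcal{C}_n/J^2)$.

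Because the base field has characteristic zero, Wedderburn--Malcev splits $\mathcal{C}_n/J^2 \cong (\mathcal{C}_n/J) \ltimes (J/J^2)$ with $(J/J^2)^2 = 0$, and one checks that $\mu(\mathcal{C}_n/J^2) = \max\{\mu(\mathcal{C}_n/J),\ \gamma(J/J^2)\}$, where $\gamma(J/J^2)$ is the minimal number of generators of $J/J^2$ as a $(\mathcal{C}_n/J)$-bimodule. Since $\mathbb{C}((Q))$ has trivial Brauer group, $\mathcal{C}_n/J \cong \bigoplus_k M_{d_k}(\mathbb{C}((Q)))$, and such a direct sum of matrix algebras over an infinite field is generated by at most two elements; so for $n \geq 3$ the entire content of the conjecture collapses to proving $\gamma(J/J^2) = \binom{n}{2}$. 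This number is controlled by the Wedderburn dimensions $d_k$ and the bimodule multiplicities in $J/J^2$ (equivalently the Gabriel quiver of $\mathcal{C}_n$), with the images of the $e_{i,j}^n$ furnishing a distinguished spanning family. The concrete handle on $J$ is the chromatic Markov trace $\tau$ (the normalized chromatic polynomial of the dual closure mentioned after the chromatic relations): $\mathcal{C}_n$ is a symmetric algebra for $\langle a,b\rangle := \tau(ab)$, so $J$ is the radical of this form, and the block structure of its Gram matrix on $B_n$ should expose $J/J^2$.

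The decisive difficulty -- and the reason this remains a conjecture -- is that this structure theory is entirely undeveloped, and it runs against the available evidence. A direct computation shows $\mathcal{C}_2 \cong \mathbb{C}((Q))^{3}$ is semisimple, since the generator $e_{1,2}^2$ satisfies the split cubic $x(x-(Q-1))(x-(Q-2))=0$; moreover the sibling diagram algebras (Temperley--Lieb, Motzkin, partition) are all semisimple at a generic parameter, which is precisely the regime modeled by the formal series variable $Q$. If $\mathcal{C}_n$ is likewise semisimple for $n \geq 3$, then $J = 0$, $\gamma(J/J^2) = 0$, and $\mu(\mathcal{C}_n) \leq 2 < \binom{n}{2}$, so the conjecture would be \emph{false}. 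The first and most important step is therefore to decide whether $\mathcal{C}_3$ is semisimple -- concretely, to compute the $15 \times 15$ Gram determinant of $\langle\,\cdot\,,\cdot\,\rangle$ on $B_3$ and test whether it vanishes identically in $Q$. Only if a nonzero radical appears does the program continue; one would then have to identify the bimodule $J/J^2$ for general $n$ and prove that $\gamma(J/J^2)$ grows exactly as $\binom{n}{2}$, presumably by exploiting the recursive peeling of crossing and isolated components used to prove Theorem~\ref{thm:chromatic-generating-set}.

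As a realistic intermediate goal I would first settle the weaker companion statement that no proper subset of $E_n$ generates $\mathcal{C}_n$. For this it suffices to exhibit, for each pair $i < j$, one finite-dimensional representation or linear functional of $\mathcal{C}_n$ that detects $e_{i,j}^n$ yet annihilates the subalgebra generated by $E_n \setminus \{e_{i,j}^n\}$; a filtration of $\mathcal{C}_n$ by number of inner vertices, made compatible with multiplication up to lower-order terms, is the most promising leading-term device. Proving irredundancy this way would establish $\binom{n}{2}$ as a lower bound for weakly minimal generating sets and would clarify exactly the representation-theoretic picture that the strong statement ultimately requires.
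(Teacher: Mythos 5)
There is nothing in the paper to compare your attempt against: the statement you were assigned is one of the paper's two open conjectures, and the paper explicitly declines to prove it (``Note that we do \textit{not} claim that $E_n$ is minimal as a generating set! This remains to be proven''). Your proposal, appropriately, does not claim a proof either; judged as a program it is sound, and it actually goes beyond the paper by assembling concrete evidence that the conjecture is likely \emph{false}. Your $\mathcal{C}_2$ computation checks out against the chromatic relations: writing $U$ for the cup--cap diagram and $E=e_{1,2}^2$, relations (1)--(3) give $E^2=(Q-2)E+U$ and $EU=UE=(Q-1)U$, hence $U=E^2-(Q-2)E$ and $E(E-(Q-1))(E-(Q-2))=0$, so $\mathcal{C}_2=\mathbb{C}((Q))[E]\cong\mathbb{C}((Q))^3$ is semisimple and generated by a \emph{single} element. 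Combined with the classical fact that a finite-dimensional semisimple algebra over an infinite field is two-generated, generic semisimplicity of $\mathcal{C}_n$ would force $\mu(\mathcal{C}_n)\leq 2<\binom{n}{2}$ for $n\geq 3$, refuting strong minimality; and generic semisimplicity is exactly what the analogy with Temperley--Lieb, Motzkin, and partition algebras (and the nondegeneracy of the Markov trace form one expects at a formal parameter $Q$) predicts. The paper nowhere engages with the semisimplicity question, so you have identified a genuine blind spot in the conjecture rather than a route to proving it; your proposed first test, the Gram determinant of the trace form on the $15$ elements of $B_3$, is the right decisive computation.

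Two technical caveats on your outline, neither of which changes the bottom line. First, the simple factors of $\mathcal{C}_n/J$ need not be matrix algebras over $\mathbb{C}((Q))$ itself; their centers can be finite extensions $\mathbb{C}((Q^{1/m}))$. Since those fields are again $C_1$ (Tsen--Lang), the factors are still matrix algebras over fields and the two-generation argument survives, but the Wedderburn decomposition should be stated over the centers. Second, the equality $\mu(\mathcal{C}_n/J^2)=\max\{\mu(\mathcal{C}_n/J),\,\gamma(J/J^2)\}$ is asserted rather than proved: the inequality $\geq$ is clear, but $\leq$ requires showing that one can perturb a generating set of the semisimple quotient by elements of $J/J^2$ so as to realize both bounds simultaneously, and a subalgebra-of-semisimple subtlety lurks there (a subalgebra of a semisimple algebra, such as the image of your candidate generators modulo $J^2$, need not behave as the semidirect-product picture suggests without the Wedderburn--Malcev splitting being chosen compatibly). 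Spelling that out, or weakening it to two-sided bounds, would be needed before the reduction to $\gamma(J/J^2)=\binom{n}{2}$ is rigorous.
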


\begin{remark}
    Prof.~Pavel Etingof raised another interesting property to study: the degree of the generating set $E_n$, defined as follows. We know that $\mathcal{C}_n$ is a quotient of the (noncommutative) ring $\mathbb{C}((Q))\langle E_n\rangle$, say by a kernel $I$. An interesting question is to find a generating set for $I$, and determine the minimum over all generating sets for $I$ of the maximal degree of its polynomials (in $e_{i,j}^{n}$). This assigns a ``complexity" to the generating set.
\end{remark}

\section*{Acknowledgements}

I would like to thank my PRIMES mentor, Merrick Cai, for his crucial and continued guidance throughout this research.

I would also like to thank Dr.~Minh-Tâm Trinh for his expert guidance and ideas, including introducing this area of study to Merrick and me.

Finally, I would like to thank the organizers of the PRIMES program, especially Prof.~Pavel Etingof, Dr.~Slava Gerovitch, Dr.~Felix Gotti, and Dr.~Tanya Khovanova, for giving me and many other students the opportunity to conduct guided math research.

%\newpage

%\pagestyle{empty}

\bibliographystyle{plain}

\end{document}